\newcommand{\N}{\mathbb{N}}
\newcommand{\SG}[1]{{}^{2\!}r\!\left(#1\right)}
\newcommand{\GG}[1]{{}^{k\!}r\!\left(#1\right)}
\newcommand{\RG}[1]{{}^{k\text{\tiny -power}\!}r\!\left(#1\right)}
\newcommand{\RP}[1]{{}^{P\!}r\!\left(#1\right)}
\newcommand{\SR}[1]{{}_{2}r\!\left(#1\right)}
\newcommand{\KR}[1]{{}_{k}r\!\left(#1\right)}
\newcommand{\KRR}[1]{{}_{k\text{\tiny -power}}r\!\left(#1\right)}
\newcommand{\KP}[1]{{}_{P}r\!\left(#1\right)}
\newcommand{\Frob}[1]{g\!\left(#1\right)}
\newcommand{\ub}[1]{h\!\left(#1\right)}
\newcommand{\ger}[1]{\left\langle #1 \right\rangle}
\renewcommand{\ge}{\geqslant}
\renewcommand{\le}{\leqslant}
\theoremstyle{plain}% default
\newtheorem{thm}{Theorem}[section]
\newtheorem{lem}[thm]{Lemma}
\newtheorem{prop}[thm]{Proposition}
\newtheorem{cor}[thm]{Corollary}
\newtheorem{conj}[thm]{Conjecture}
\newtheorem{quest}{Question}
\newtheorem{prob}{Problem}
\theoremstyle{definition}
\newtheorem{defn}[thm]{Definition}
\theoremstyle{remark}
\newtheorem{rem}{Remark}
\newtheorem{case}{Case}
\definecolor{Green}{rgb}{0.0, 0.42, 0.24}
\begin{document}

\title[The square Frobenius number]{The square Frobenius number}

\author[J. Chappelon]{Jonathan Chappelon}
\address{IMAG, Univ.\ Montpellier, CNRS, Montpellier, France}
\email{\href{mailto:jonathan.chappelon@umontpellier.fr}{jonathan.chappelon@umontpellier.fr}}

\author[J.L. Ram\'irez Alfons\'in]{Jorge Luis Ram\'irez Alfons\'in}
\address{UMI2924 - Jean-Christophe Yoccoz, CNRS-IMPA, Brazil and IMAG, Univ.\ Montpellier, CNRS, Montpellier, France}
\email{\href{mailto:jorge.ramirez-alfonsin@umontpellier.fr}{jorge.ramirez-alfonsin@umontpellier.fr}}

\thanks{The second author was partially supported by INSMI-CNRS}

\date{May 9, 2022}

%\sloppy

%%%%%%%%%%%%%%%%%%%%%%%
%%%%%%%%%%%%%%%%%%%%%%%
\begin{abstract}
Let $S=\left\langle s_1,\ldots,s_n\right\rangle$ be a numerical semigroup generated by the relatively prime positive integers $s_1,\ldots,s_n$. Let $k\geqslant 2$ be an integer. In this paper, we consider the following $k$-power variant of the Frobenius number of $S$ defined as 
$$
{}^{k\!}r\!\left(S\right):= \text{ the largest } k \text{-power integer not belonging to } S.
$$
In this paper, we investigate the case $k=2$. We give an upper bound for ${}^{2\!}r\!\left(S_A\right)$  for an infinite family of semigroups $S_A$ generated by {\em arithmetic progressions}. The latter turns out to be the exact value of ${}^{2\!}r\!\left(\left\langle s_1,s_2\right\rangle\right)$ under certain conditions. We present an exact formula for ${}^{2\!}r\!\left(\left\langle s_1,s_1+d \right\rangle\right)$ when $d=3,4$ and $5$, study ${}^{2\!}r\!\left(\left\langle s_1,s_1+1 \right\rangle\right)$ and ${}^{2\!}r\!\left(\left\langle s_1,s_1+2 \right\rangle\right)$ and put forward  two relevant conjectures. We finally discuss some related questions. 
\end{abstract}
%%%%%%%%%%%%%%%%%%%%%%%
\subjclass[2010]{Primary 11D07}
\keywords{Numerical semigroups, Frobenius number, Perfect square integer}
\maketitle
%%%%%%%%%%%%%%%%%%%%%%%
%%%%%%%%%%%%%%%%%%%%%%%

%%%%%%%%%%%%%%%%%%%%%%%
%%%%%%%%%%%%%%%%%%%%%%%
\section{Introduction}
%%%%%%%%%%%%%%%%%%%%%%%
%%%%%%%%%%%%%%%%%%%%%%%

Let $s_1, \dots, s_n$ be relatively prime positive integers. Let 
$$
S=\ger{s_1, \dots,s_n}= \left\{\sum_{i=1}^n x_is_i\ \middle|\  x_i \text{ integer, } x_i\ge 0\right\}
$$ 
be the numerical semigroup generated by $s_1, \dots, s_n$. The largest integer which is not an element of $S$, denoted by $\Frob{S}$ or $\Frob{\ger{s_1,\dots,s_n}}$, is called the {\em Frobenius number} of $S$. It is well known that $\Frob{\ger{s_1,s_2}} = s_1s_2-s_1-s_2$. However, calculating $\Frob{S}$ is a difficult problem in general.  In \cite{ramirez1} was shown that computing $\Frob{S}$ is {\em NP-hard}. We refer the reader to \cite{ramirez2} for an extensive literature on the Frobenius number.
\smallskip

Throughout this paper, the set of non-negative integers is denoted by $\N$. The non-negative integers not in $S$ are called the {\em gaps} of $S$. The number of gaps of $S$, denoted by $N(S)$ (that is, $N(S)=\# (\mathbb{N}\setminus S)$) is called the {\em genus} of $S$. We recall that the {\em multiplicity} of $S$ is the smallest positive element belonging to $S$.
\smallskip

Given a particular (arithmetical, number theoretical, etc.) {\em Property} $P$, one might consider the following two {\em $P$-type functions} of a semigroup $S$: 
$$
\hbox{$\RP{S}$:= the largest integer having property $P$ not belonging to $S$}
$$
and
$$
\hbox{$\KP{S}:=$ the smallest integer having property $P$ belonging to $S$.}
$$
Notice that the multiplicity and the Frobenius number are $P$-type functions where $P$ is the property of being a positive integer\footnote{{\em $P$-type functions} were introduced by the second author (often mentioned during his lectures) with the hope to better understand certain properties $P$ in terms of linear forms.}.
\smallskip

%In order to understand better positive linear forms of $k$-power, a problem in relation with $k$-power sequences was posed in \cite[Problem A.3.3]{ramirez2}. 

In this spirit, we consider the property of being {\em perfect $k$-power} integer (that is, integers of the form $m^k$ for some integers $m,k> 1$). Let $k\ge 2$ be an integer, we define 
$$
\RG{S}:= \text{ the largest perfect } k \text{-power integer not belonging to } S.
$$

This $k$-power variant of $\Frob{S}$ is called the $k$-{\em power Frobenius number} of $S$, we may write $\GG{S}$ for short.
\smallskip

In this paper we investigate the $2$-power Frobenius number, we call it the {\em square Frobenius number}. 
\smallskip

In Section~\ref{sec:arithprogr}, we study the square Frobenius number of semigroups $S_A$ generated by {\em arithmetic progressions}. We give an upper bound for $\SG{S_A}$ for an infinite family (Theorem~\ref{theo:ap}) which turns out to be the exact value when the arithmetic progression consists of two generators (Corollary~\ref{cor:ap}).  
\smallskip

In Section~\ref{sec:2gen345}, we present exact formulas for $\SG{\ger{a, a+3}}$ where $a\ge 3$ is an integer not divisible by $3$ (Theorem~\ref{thm:2gen3}), for $\SG{\ger{a, a+4 }}$ where $a\ge 3$ is an odd integer (Theorem~\ref{thm:2gen4}) and for $\SG{\ger{a, a+5}}$ where $a\ge 2$ is an integer not divisible by $5$ (Theorem~\ref{thm:2gen5}).
\smallskip

In Sections~\ref{sec:2gen1} and \ref{sec:2gen2}, we turn our attention to the cases $\ger{a, a+1}$ where $a\ge2$ and $\ger{a,a+2}$ where $a\ge 3$ is an odd integer. We present formulas for the corresponding square Frobenius number in the case when neither of the generators are square integers (Propositions~\ref{prop:2gen1} and \ref{prop:2gen2}). We also put forward two conjectures on the values of $\SG{\ger{a,a+1}}$ and $\SG{\ger{a,a+2}}$ in the case when one of the generators is a square integer (Conjectures~\ref{conj:2gen1} and \ref{conj:2gen2}). The conjectured values have an unexpected close connection with a known recursive sequence (Equation~\eqref{eq:recursive}) and in which $\sqrt 2$ and $\sqrt 3$ (strangely) appear. A number of computer experiments support our conjectures.
\smallskip

Finally, Section~\ref{sec:concluding} contains some concluding remarks.

%%%%%%%%%%%%%%%%%%%%%%%
%%%%%%%%%%%%%%%%%%%%%%%
\section{Arithmetic progression}\label{sec:arithprogr}
%%%%%%%%%%%%%%%%%%%%%%%
%%%%%%%%%%%%%%%%%%%%%%%

Let $a$, $d$ and $k$ be positive integers such that $a$ and $d$ are relatively prime. Throughout this section, we denote by $S_A$  the semigroup generated  by the {\em arithmetic progression} whose first element is $a$, with common difference $d$ and of length $k+1$, that is,
$$
S_A =  \ger{a,a+d,a+2d,\ldots,a+kd}.
$$
Note that the integers $a,a+d,\ldots,a+kd$ are relatively prime if and only if $\gcd(a,d)=1$.

We shall start by giving a necessary and sufficient condition for a square to belong to $S_A$.

For any integer $x$ coprime to $d$, a {\em multiplicative inverse modulo $d$} of $x$ is an integer $y$ such that $xy\equiv1\bmod{d}$.

\begin{prop}\label{prop2}
Let $i$ be an integer and let $\lambda_i$ be the unique integer in $\left\{0,1,\ldots,d-1\right\}$ such that $\lambda_ia+i^2\equiv0\bmod{d}$. In other words, the integer $\lambda_i$ is the remainder in the Euclidean division of $-a^{-1}i^2$ by $d$, where $a^{-1}$ is a multiplicative inverse of $a$ modulo $d$. Then, 

$$(a-i)^2 \in S_A \text{ if and only if }
(i+kd)^2\le \left(\left(\left\lfloor\frac{i^2+\lambda_ia}{ad}\right\rfloor+k\right)d-\lambda_i\right)(a+kd).
$$
\end{prop}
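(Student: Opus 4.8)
The plan is to reduce the membership question to the standard criterion for semigroups generated by arithmetic progressions. Writing a generic element of $S_A$ as $\sum_{j=0}^{k}x_j(a+jd)$ with $x_j\in\N$ and setting $N=\sum_j x_j$ and $M=\sum_j j x_j$, every element has the shape $Na+Md$. For a fixed total multiplicity $N$, the weighted sum $M=\sum_j jx_j$ realises exactly the integers of the interval $[0,kN]$: starting from $N$ copies of $a$ (where $M=0$) one raises $M$ by one at a time, replacing a generator $a+jd$ by $a+(j+1)d$, until reaching $N$ copies of $a+kd$ (where $M=kN$). Hence the criterion reads
\[
m\in S_A \iff \exists\,N\in\N \text{ with } d\mid(m-Na) \text{ and } 0\le \tfrac{m-Na}{d}\le kN .
\]
First I would record this equivalence (or quote it if it is available), since it turns the problem into a search for a suitable $N$.

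Next I would specialise to $m=(a-i)^2=a^2-2ai+i^2$ and exhibit the whole family of admissible pairs. Seeking $N$ of the form $a-2i+t$ one gets $m-Na=i^2-ta$, so $d\mid(m-Na)$ forces $ta\equiv i^2\pmod d$, that is $t\equiv-\lambda_i\pmod d$ by the defining congruence $\lambda_i a+i^2\equiv0\pmod d$. Therefore all solutions are
\[
N=a-2i-\lambda_i+d\ell,\qquad M=\frac{i^2+\lambda_i a}{d}-\ell a,\qquad \ell\in\Z,
\]
where $M$ is an integer precisely because $d\mid(i^2+\lambda_i a)$. A direct substitution gives $Na+Md=(a-i)^2$ for every $\ell$, and raising $\ell$ by $1$ trades $a$ units of $M$ for $d$ units of $N$ while preserving the value.

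The key step is a monotonicity argument in $\ell$. The constraint $M\ge0$ is equivalent to $\ell\le L:=\big\lfloor\frac{i^2+\lambda_i a}{ad}\big\rfloor$ (using $a>0$), while the quantity $kN-M$ is strictly increasing in $\ell$, its $\ell$-increment being $kd+a>0$. Consequently, among all $\ell$ with $M\ge0$ the remaining inequality $M\le kN$ is easiest to satisfy at the largest admissible value $\ell=L$, and it can hold for some admissible $\ell$ if and only if it holds at $\ell=L$. Thus $(a-i)^2\in S_A$ if and only if $M\le kN$ when $\ell=L$, namely
\[
\frac{i^2+\lambda_i a}{d}-La\le k\bigl(a-2i-\lambda_i+dL\bigr).
\]

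Finally I would clear denominators and rearrange: multiplying by $d$, moving the $k$-terms across, adding $k^2d^2$ to both sides to complete $(i+kd)^2$ on the left, and factoring $a+kd$ out of the right converts the inequality into exactly
\[
(i+kd)^2\le\Bigl(\bigl(L+k\bigr)d-\lambda_i\Bigr)(a+kd),
\]
which is the claimed criterion once $L$ is replaced by its value. The only genuinely delicate point is the membership criterion of the first paragraph — the claim that $M$ attains every value of $[0,kN]$; everything afterwards is the explicit parametrisation, a one-variable monotonicity optimisation, and a routine algebraic rearrangement.
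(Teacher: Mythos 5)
Your proposal is correct and follows essentially the same route as the paper: the paper isolates your first and third paragraphs as a separate lemma (a variant of Roberts' criterion stating that $M=ax+dy$ with $0\le y\le a-1$ lies in $S_A$ if and only if $y\le kx$), applies it to the canonical representation of $(a-i)^2$ --- which is exactly your $\ell=L$ representation, since $0\le \frac{i^2+\lambda_ia}{d}-La<a$ --- and then performs the same algebraic rearrangement. Your explicit parametrisation and monotonicity-in-$\ell$ argument is a sound, self-contained substitute for that lemma, so there is nothing to fix.
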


A key step for the proof of this result is the following lemma, which can be thought as a variant of a result given in \cite{Rob}, see \cite[Lemma 1]{Rod} for a short proof. The arguments for the proof of this variant are similar to those used in the latter.

\begin{lem}\label{prop1}
Let $M$ be a non-negative integer and let $x$ and $y$ be the unique integers such that $M=ax+dy$, with $0\le y\le a-1$. Then, 

$$M\in S_A \text{ if and only if } y\le kx \text{ (with } x\ge 0 \text{)}.$$
\end{lem}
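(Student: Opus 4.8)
The plan is to reduce membership in $S_A$ to a purely arithmetic condition on a single lattice point. Writing an arbitrary element of $S_A$ as $\sum_{j=0}^{k} c_j(a+jd)$ with each $c_j\ge 0$, I would set $X=\sum_{j=0}^{k}c_j$ and $Y=\sum_{j=0}^{k}jc_j$, so that $M=aX+dY$. The weight $Y$ then satisfies $0\le Y\le kX$ since each index $j$ lies between $0$ and $k$. Conversely, I would show that any pair of integers $(X,Y)$ with $X\ge 0$ and $0\le Y\le kX$ arises in this way, by exhibiting explicit nonnegative coefficients: writing $Y=kq+r$ with $0\le r\le k-1$, one concentrates the weight by taking $c_k=q$, one unit at index $r$ when $r>0$, and the remaining units at index $0$; the inequality $Y\le kX$ is exactly what makes the leftover count at index $0$ nonnegative. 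This establishes the characterization
$$
M\in S_A \iff M=aX+dY \text{ for some integers } X\ge 0,\ 0\le Y\le kX.
$$

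Next I would bring in the canonical representation. Since $\gcd(a,d)=1$, for fixed $M$ the integer solutions of $M=aX+dY$ are exactly $(X,Y)=(x+td,\,y-ta)$ for $t\in\Z$, and the normalization $0\le y\le a-1$ singles out the unique solution of minimal nonnegative $Y$. The easy direction is then immediate: if $x\ge 0$ and $y\le kx$, then $(X,Y)=(x,y)$ already satisfies $X\ge 0$ and $0\le Y\le kX$ (using $y\ge 0$), whence $M\in S_A$.

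For the converse, suppose $M\in S_A$ and pick a witnessing representation $(X,Y)=(x+t_0 d,\,y-t_0 a)$ with $X\ge 0$ and $0\le Y\le kX$. The key observation is that $Y=y-t_0a\ge 0$ together with $y\le a-1<a$ forces $t_0\le 0$. From $t_0\le 0$ I get $x=X-t_0d\ge X\ge 0$, and
$$
kx-y=k(X-t_0d)-(Y+t_0a)=(kX-Y)-t_0(kd+a)\ge 0,
$$
since both $kX-Y\ge 0$ and $-t_0(kd+a)\ge 0$. This gives $y\le kx$ and finishes the argument.

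The main obstacle is the reverse direction of the lattice characterization—producing nonnegative coefficients $c_j$ realizing a prescribed $(X,Y)$—and, more delicately, controlling the sign of the translation parameter $t_0$ so that passing from an arbitrary witnessing representation to the canonical one preserves both inequalities $x\ge 0$ and $y\le kx$. The normalization $0\le y\le a-1$ is precisely chosen to force $t_0\le 0$, which is what drives both conclusions simultaneously.
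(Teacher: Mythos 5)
Your proof is correct and follows essentially the same route as the paper's: the same regrouping $X=\sum c_j$, $Y=\sum jc_j$ for the forward direction, the same Euclidean-division construction $Y=kq+r$ for the converse, and the same control of the translation between an arbitrary representation and the canonical one (your $t_0$ is the paper's $-\lambda$, forced nonpositive by $0\le Y$ and $y\le a-1$). The only difference is organizational: you isolate the lattice characterization $M\in S_A\iff M=aX+dY$ with $X\ge0$, $0\le Y\le kX$ as a standalone step before normalizing, which is a slightly cleaner packaging of the identical argument.
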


\begin{proof}
First, suppose that $M\in S_A$ and let $x_0,x_1,\ldots,x_k$ be non-negative integers such that $M=\sum_{i=0}^{k}x_i(a+id)$. Then, we have that
$$
M = \sum_{i=0}^{k}x_i a + \sum_{i=0}^{k}ix_i d = x'a+y'd,
$$
with $x'=\sum_{i=0}^{k}x_i\in\N$ and $y'=\sum_{i=0}^{k}ix_i\in\N$. It follows that
$$
y' = \sum_{i=0}^{k}ix_i \le k\sum_{i=0}^{k}x_i = kx'.
$$
Moreover, since $M=xa+yd$ with $y\in\{0,1,\ldots,a-1\}$, we obtain that there exists a non-negative integer $\lambda$ such that
$$
y'=y+\lambda a \quad\text{and}\quad x'=x-\lambda a.
$$
This leads to the inequality
$$
y=y'-\lambda a\le kx'-\lambda a = kx-\lambda(k+1)a \le kx.
$$
Conversely, suppose now that $y\le kx$. Obviously, since $y\ge0$, we know that $x\ge0$. Let
$$
y=qk+r
$$
be the Euclidean division of $y$ by $k$, with $q\in\N$ and $r\in\{0,1,\ldots,k-1\}$. If $r=0$, then we have that $0\le q\le x$ since $y=qk\le kx$. It follows that
$$
M = xa+qkd = (x-q)a + q(a+kd) \in S_A.
$$
Finally, if $r>0$, then we have that $0\le q\le x-1$ since $y=qk+r\le kx$. It follows that
$$
M = xa+(qk+r)d = (x-q-1)a + q(a+kd) + (a+rd) \in S_A.
$$
This completes the proof.
\end{proof}

We may now prove Proposition~\ref{prop2}.

\begin{proof}[Proof of Proposition~\ref{prop2}]
Let $i$ be an integer and let $\lambda_i\in\{0,1,\ldots,d-1\}$ such that $\lambda_ia+i^2\equiv0\bmod{d}$. We have that
$$
(a-i)^2 \begin{array}[t]{l}
= \displaystyle (a-2i)a + i^2\\ 
= \displaystyle (a-2i-\lambda_i)a + \frac{i^2+\lambda_ia}{d}d \\ \ \\
= \displaystyle\left(a-2i-\lambda_i+\left\lfloor\frac{i^2+\lambda_ia}{ad}\right\rfloor d\right)a + \left( \frac{i^2+\lambda_ia}{d}-\left\lfloor\frac{i^2+\lambda_ia}{ad}\right\rfloor a \right)d. \\
\end{array}
$$
We thus have, by Lemma~\ref{prop1}, that the square $(a-i)^2$ is in $S_A$ if and only if
$$
\begin{array}{ll}
 & \displaystyle\frac{i^2+\lambda_ia}{d}-\left\lfloor\frac{i^2+\lambda_ia}{ad}\right\rfloor a \le k\left( a-2i-\lambda_i+\left\lfloor\frac{i^2+\lambda_ia}{ad}\right\rfloor d \right) \\ \ \\
\Longleftrightarrow & \displaystyle\frac{i^2+\lambda_ia}{d} \le k\left( a-2i-\lambda_i\right) + \left\lfloor\frac{i^2+\lambda_ia}{ad}\right\rfloor (a+kd) \\ \ \\ 
\Longleftrightarrow & \displaystyle i^2+\lambda_ia \le kd\left( a-2i-\lambda_i\right) + \left\lfloor\frac{i^2+\lambda_ia}{ad}\right\rfloor d(a+kd) \\ \ \\ 
\Longleftrightarrow & \displaystyle i^2+ 2ikd \le kda -\lambda_i(a+kd) + \left\lfloor\frac{i^2+\lambda_ia}{ad}\right\rfloor d(a+kd) \\ \ \\ 
\Longleftrightarrow & \displaystyle i^2+ 2ikd +k^2d^2\le kd(a+kd) -\lambda_i(a+kd) + \left\lfloor\frac{i^2+\lambda_ia}{ad}\right\rfloor d(a+kd) \\ \ \\ 
\Longleftrightarrow & \displaystyle (i+kd)^2\le \left(\left(\left\lfloor\frac{i^2+\lambda_ia}{ad}\right\rfloor+k\right)d-\lambda_i\right)(a+kd). 
\end{array}
$$
This completes the proof.
\end{proof}

\begin{rem}\label{rem1}
We have that $\lambda_0 = 0$ and $\lambda_i>0$ for all integers $i$ such that $\gcd(i, d) = 1$ with $d\ge 2$. Moreover, $\lambda_i = \lambda_{d-i}$ for all $i\in\{1,2,\dots, d-1\}$.
\end{rem}

The above characterization permits us to obtain an upper-bound of $\SG{S_A}$ when $a$ is larger enough compared to $d\ge3$.

\begin{defn}
Let $\lambda^*$ be the integer defined by
$$
\lambda^*=\max\limits_{0\le i\le d-1}\left\{\lambda_i\in \{0,1,\ldots,d-1\}\ \mid  \lambda_i a+i^2\equiv 0\bmod{d} \right\}.
$$
Let $\left\{\alpha_1<\ldots <\alpha_n\right\}\subseteq \left\{0,1,\ldots,d-1\right\}$ such that $\lambda_{\alpha_j}=\lambda^*$ and take $\alpha_{n+1}=d+\alpha_1$. Let $j\in\left\{1,\ldots,n\right\}$ be the index such that
\begin{equation}\label{eq:definition}
{\left(\mu d+\alpha_j\right)}^2\le (kd-\lambda^*)(a+kd)<{\left(\mu d+\alpha_{j+1}\right)}^2,
\end{equation}
for some integer $\mu\ge 0$. 
\end{defn}

\begin{rem}\label{rem2}
\begin{enumerate}[(a)]
\item[]
\item
The above index $j$ exists and it is unique. Indeed, we clearly have that there is an integer $\mu$ such that
$$\mu d\le \sqrt{(kd-\lambda^*)(a+kd)}<(\mu+1) d.$$
Since $0\le \alpha_1<\cdots <\alpha_n\le d-1$, then the interval $[\mu d,(\mu+1)d[$ can be refined into intervals of the form $[\mu d+\alpha_i,\mu d+\alpha_{i+1}[$ for each $i=1,\dots ,n-1$. Therefore, there is a unique index $j$ verifying equation \eqref{eq:definition}.
\item
We have that $\mu d +\alpha_{n+1}=(\mu+1)d+\alpha_1$.
\end{enumerate}
\end{rem}

The following two propositions give us useful information on the sequence of indices $\alpha_1,\dots ,\alpha_n$.

\begin{prop}\label{prop3}
We have that $\alpha_i+\alpha_{n+1-i}=d$, for all $i\in\{1,\ldots,n\}$.
\end{prop}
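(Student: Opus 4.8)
The plan is to exploit the symmetry $\lambda_i=\lambda_{d-i}$ recorded in Remark~\ref{rem1}, together with the fact that the maximal value $\lambda^*$ is never attained at the index $0$. Concretely, I would show that the order-reversing involution $i\mapsto d-i$ permutes the set $\{\alpha_1,\ldots,\alpha_n\}$, and then read off the pairing $\alpha_i+\alpha_{n+1-i}=d$ directly from the fact that an order-reversing bijection of a finite totally ordered set matches the $i$-th smallest element with the $i$-th largest.

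First I would check that $\lambda^*>0$. Since $\gcd(a,d)=1$ and $d\ge2$, the integer $1$ is coprime to $d$, so Remark~\ref{rem1} gives $\lambda_1>0$, whence $\lambda^*\ge\lambda_1>0$. Because $\lambda_0=0<\lambda^*$, the index $0$ cannot belong to $\{\alpha_1,\ldots,\alpha_n\}$, so in fact $\{\alpha_1,\ldots,\alpha_n\}\subseteq\{1,2,\ldots,d-1\}$. On this set the map $\sigma\colon i\mapsto d-i$ is a well-defined involution, and by Remark~\ref{rem1} it preserves the value of $\lambda$, namely $\lambda_{\sigma(i)}=\lambda_{d-i}=\lambda_i$. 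In particular $\lambda_{\sigma(\alpha_j)}=\lambda_{\alpha_j}=\lambda^*$ for every $j$, so $\sigma$ maps the set $A=\{\alpha_1,\ldots,\alpha_n\}$ into itself; being an involution, it permutes $A$.

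To finish, observe that $\sigma$ is strictly decreasing on $\{1,\ldots,d-1\}$, since $\alpha_i<\alpha_j$ forces $d-\alpha_i>d-\alpha_j$. As $A$ is listed in strictly increasing order $\alpha_1<\cdots<\alpha_n$, the only order-reversing bijection of $A$ onto itself is the one sending $\alpha_i$ to $\alpha_{n+1-i}$; hence $\sigma(\alpha_i)=\alpha_{n+1-i}$, that is $d-\alpha_i=\alpha_{n+1-i}$, which is exactly the claimed identity $\alpha_i+\alpha_{n+1-i}=d$ for all $i\in\{1,\ldots,n\}$.

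I do not expect a genuine obstacle here; the one point requiring care is ensuring $0\notin A$ (equivalently $\lambda^*>0$), since otherwise the involution $i\mapsto d-i$ would not stabilise the index set inside $\{0,\ldots,d-1\}$. Once that is in place, the remainder is the purely elementary observation that an order-reversing involution of a finite totally ordered set pairs extremes with extremes.
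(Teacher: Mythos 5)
Your proof is correct and follows essentially the same route as the paper, which likewise deduces the pairing from the symmetry $\lambda_{d-i}=\lambda_i$ of Remark~\ref{rem1} applied to the increasingly ordered set $\{\alpha_1,\ldots,\alpha_n\}$. Your write-up merely makes explicit the two points the paper leaves implicit, namely that $\lambda^*>0$ forces $0\notin\{\alpha_1,\ldots,\alpha_n\}$ and that an order-reversing involution pairs the $i$-th smallest element with the $i$-th largest.
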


\begin{proof}
Since $\left\{i\in\{1,\ldots,n\}\ \middle|\ \lambda_i= \lambda^*\right\}=\left\{\alpha_1,\ldots,\alpha_n\right\}$, with $\alpha_1<\alpha_2<\cdots<\alpha_n$, and since $\lambda_{d-i}=\lambda_i$, for all $i\in\left\{1,\ldots,d-1\right\}$, by Remark~\ref{rem1}.
\end{proof}

\begin{prop}\label{prop4}
If $d\ge3$ then $n\ge2$ and $1\le\alpha_1<\frac{d}{2}<\alpha_n\le d-1$.
\end{prop}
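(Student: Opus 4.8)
The plan is to deduce the whole statement from the single claim that $n\ge 2$. First I would record two immediate facts. Since $\gcd(1,d)=1$ and $d\ge 2$, Remark~\ref{rem1} gives $\lambda_1>0$, so $\lambda^*\ge\lambda_1>0=\lambda_0$; hence $0$ is never a maximizer and $\alpha_1\ge 1$. Next, applying Proposition~\ref{prop3} with $i=1$ gives $\alpha_1+\alpha_n=d$, whence $\alpha_n=d-\alpha_1\le d-1$. Once $n\ge 2$ is known, the strict inequality $\alpha_1<\alpha_n$ together with $\alpha_1+\alpha_n=d$ forces $\alpha_1<\frac{d}{2}<\alpha_n$, while the bounds $1\le\alpha_1$ and $\alpha_n\le d-1$ are already in hand. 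So everything reduces to showing $n\ge 2$.

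To prove $n\ge 2$ I would argue by contradiction, assuming $n=1$. Because $\lambda_{d-i}=\lambda_i$ for $i\in\{1,\dots,d-1\}$ (Remark~\ref{rem1}) and $\alpha_1\ge 1$, the reflection $d-\alpha_1$ is also a maximizer; uniqueness then forces $d-\alpha_1=\alpha_1$, i.e. $\alpha_1=\frac{d}{2}$, so $d$ must be even. Writing $d=2m$, the task becomes to show that $d/2=m$ cannot be the unique maximizer of $\lambda$ when $d\ge 3$.

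I would finish by splitting on $d\bmod 4$. If $4\mid d$ then $m$ is even and $(d/2)^2=m^2\equiv 0\pmod d$, so $\lambda_{d/2}=0<\lambda^*$, contradicting that $d/2$ is a maximizer. The substantive case is $d\equiv 2\pmod 4$, where $m$ is odd, and this is the step I expect to be the main obstacle — or rather, it rests on the one idea that makes the whole argument work: the shift identity $\lambda_{i+m}\equiv\lambda_i+m\pmod d$, valid for every $i$. It follows by expanding $(i+m)^2=i^2+2im+m^2$ and using that $m$ is odd (so $m^2\equiv m$ and $2im\equiv 0\pmod{2m}$) together with the fact that $a^{-1}$ is odd, being coprime to $2m$. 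This identity gives at once $\lambda_{d/2}=\lambda_{0+m}=m$, and it shows that for each $i$ exactly one of $\lambda_i,\lambda_{i+m}$ lies in $\{m,\dots,2m-1\}$, since the two differ by $m$ modulo $2m$. Taking $i=1$ then produces an index $j\in\{1,m+1\}$, necessarily distinct from $m=d/2$ (as $m\ge 3$), with $\lambda_j\ge m=\lambda_{d/2}=\lambda^*$; thus $j$ is a second maximizer, contradicting $n=1$. The contradiction in both cases yields $n\ge 2$ and completes the proof. The congruences $m^2\equiv m$, $2im\equiv 0$, the oddness of $a^{-1}$, and the elementary observation that two residues differing by $m$ modulo $2m$ straddle the threshold $m$ are all routine and I would confine them to a line or two; the genuine content is the reduction to the single fixed point $d/2$ and the shift identity that defeats it.
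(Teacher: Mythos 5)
Your proposal is correct and follows essentially the same route as the paper: reduce to proving $n\ge 2$, assume $n=1$ to force $\alpha_1=d/2$ with $d$ even, split on $d\bmod 4$, and in the case $d\equiv 2\pmod 4$ use the shift identity $\lambda_{i+d/2}\equiv\lambda_i+\tfrac{d}{2}\pmod{d}$ to produce a second maximizer among $\{1,1+\tfrac{d}{2}\}$. Your phrasing of the final step (one of $\lambda_1,\lambda_{1+d/2}$ lies in $\{\tfrac{d}{2},\dots,d-1\}$, hence ties the maximum at an index distinct from $\tfrac{d}{2}$) is a slightly cleaner formulation of the paper's concluding inequality, but the argument is the same.
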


\begin{proof}
Suppose that $n=1$ and hence $\alpha_n=\alpha_1$. Since $d=\alpha_1+\alpha_n=2\alpha_1$ by Proposition~\ref{prop3}, it follows that $d$ is even and $\alpha_1=\frac{d}{2}$.

If $d$ is divisible by $4$ then
$$
\left(\frac{d}{2}\right)^2 = \frac{d}{4}\cdot d \equiv 0 \pmod{d}.
$$
Therefore, $\lambda^*=\lambda_{\alpha_1}=\lambda_{\frac{d}{2}}=0$. Moreover, since $\gcd(1,d)=1$, we know that $\lambda_1>0$. It follows that $\lambda_1>\lambda^*$, in contradiction with the maximality of $\lambda^*$.

If $d$ is even, not divisible by $4$, then $\frac{d}{2}$ is odd and
$$
\left(\frac{d}{2}\right)^2 = \frac{d}{2}\cdot\frac{d}{2} = \frac{\frac{d}{2}-1}{2}d+\frac{d}{2} \equiv \frac{d}{2}\pmod{d}.
$$
Since $a$ is coprime to $d$, we know that there exists a multiplicative inverse $a^{-1}$ modulo $d$ such that $aa^{-1}\equiv 1\bmod{d}$. Since $d$ is even, it follows that $a^{-1}$ is odd and we obtain that
$$
\lambda_{\frac{d}{2}} \equiv -a^{-1}\left(\frac{d}{2}\right)^2 \equiv -a^{-1}\frac{d}{2} \equiv \frac{d}{2} \pmod{d}.
$$
Therefore, $\lambda_{\frac{d}{2}}=\frac{d}{2}$. Moreover, for any $i\in\left\{0,\ldots,\frac{d}{2}-1\right\}$, since
$$
\left(i+\frac{d}{2}\right)^2 = i^2+id+\left(\frac{d}{2}\right)^2 \equiv i^2+\frac{d}{2} \pmod{d}
$$
and since $a^{-1}$ is odd, it follows that
$$
\lambda_{i+\frac{d}{2}} \equiv -a^{-1}\left(i+\frac{d}{2}\right)^2 \equiv -a^{-1}i^2-a^{-1}\frac{d}{2} \equiv \lambda_i + \frac{d}{2} \pmod{d},
$$ 
for all $i\in\left\{0,\ldots,\frac{d}{2}-1\right\}$. Since $d\ge3$, we have that $1<\frac{d}{2}<1+\frac{d}{2}<d$. Finally, since $\lambda_1>0$, we deduce that
$$
\max\left\{\lambda_1,\lambda_{1+\frac{d}{2}}\right\} > \frac{d}{2} = \lambda_{\frac{d}{2}},
$$
in contradiction with the maximality of $\lambda_{\frac{d}{2}}$.

We thus have that if $d\ge3$ then $n\ge2$ and $\alpha_1<\alpha_n$. Since $\alpha_1+\alpha_n=d$, by Proposition~\ref{prop3}, we deduced that $\alpha_1<\frac{d}{2}<\alpha_n$. This completes the proof.
\end{proof}

\begin{defn}
Let us now consider the integer function $\ub{a,d,k}$ defined as
$$
\ub{a,d,k} := {\left(a-\left((\mu-k)d+\alpha_{j+1}\right)\right)}^2.
$$
\end{defn}

\begin{rem} We notice that the function $\ub{a,d,k}$ can always be computed for any relatively prime integers $a$ and $d$ and any positive integer $k$. It is enough to calculate $\lambda_i$ for each $i=0,\dots ,d-1$, from which $\lambda^*$ and the set of $\alpha_i$'s can be obtained and thus the desired $\mu$ and $\alpha_{j+1}$ can be computed. 
\end{rem}

%The main result of this section is that $\ub{a,d,k}$ is an upper-bound of $\SG{S_A}$, when $a$ is relatively large compared to $d\ge3$.

\begin{thm}\label{theo:ap}
Let $d\ge3$ and $a+kd\ge 4kd^3$. Then,
$$
\SG{S_A} \le \ub{a,d,k}.
$$
\end{thm}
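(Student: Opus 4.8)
The plan is to prove the equivalent statement that every perfect square strictly larger than $\ub{a,d,k}$ belongs to $S_A$. I would write an arbitrary square as $(a-i)^2$ with $i=a-m\le a$, so that $m=a-i\ge 0$ runs over all non-negative integers, and recall that $\ub{a,d,k}=(a-i^*)^2$ where $i^*=(\mu-k)d+\alpha_{j+1}$, i.e. $i^*+kd=\mu d+\alpha_{j+1}$. Since the hypothesis gives $\mu d+\alpha_{j+1}\le\sqrt{kd(a+kd)}+d\le a+kd$, we have $i^*\le a$, so $a-i^*\ge 0$; hence it suffices to prove the single implication: if $(a-i)^2\notin S_A$ then $i\ge i^*$. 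Indeed, a gap with $i^*\le i\le a$ gives $0\le a-i\le a-i^*$ and therefore $(a-i)^2\le(a-i^*)^2=\ub{a,d,k}$. Throughout I use Proposition~\ref{prop2} in the form that $(a-i)^2\notin S_A$ if and only if
$$
(i+kd)^2>\bigl((F_i+k)d-\lambda_i\bigr)(a+kd),\qquad F_i:=\left\lfloor\tfrac{i^2+\lambda_ia}{ad}\right\rfloor\ge 0.
$$

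First I would dispose of the very large squares. If $i\le-kd$, then $|i+kd|\le|i|$, so $(i+kd)^2\le i^2$, while $F_i\ge \tfrac{i^2}{ad}-1$ makes the right–hand side above at least $\tfrac{i^2(a+kd)}{a}-d(a+kd)$, which exceeds $i^2\ge(i+kd)^2$ once $|i|\ge a+kd$; such squares are always in $S_A$. For the remaining range $-kd<i\le a$ we have $i+kd>0$, so the displayed inequality may be treated by taking square roots. The crude estimate $F_i\ge 0$ together with $\lambda_i\le\lambda^*$ shows that a gap forces
$$
(i+kd)^2>(kd-\lambda^*)(a+kd)\ge(\mu d+\alpha_j)^2,
$$
the last inequality being the defining property \eqref{eq:definition} of $\mu$ and $j$. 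Hence any gap satisfies $i+kd>\mu d+\alpha_j$, so every square with $i+kd\le\mu d+\alpha_j$ already belongs to $S_A$.

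It remains to exclude the narrow band $\mu d+\alpha_j<i+kd<\mu d+\alpha_{j+1}$, and this is the crux. For such an $i$ the residue $i\bmod d$ lies strictly between the two consecutive $\lambda^*$-residues $\alpha_j$ and $\alpha_{j+1}$ (which exist because $n\ge 2$ by Proposition~\ref{prop4}, with the wrap-around case $j=n$ covered by Remark~\ref{rem2}(b)); by maximality of $\lambda^*$ and Remark~\ref{rem1} it is therefore not a maximal residue, so $\lambda_i\le\lambda^*-1$. A gap would then force, again using $F_i\ge 0$, the inequality $(i+kd)^2>(kd-\lambda^*+1)(a+kd)$, whereas $(i+kd)^2\le(\mu d+\alpha_{j+1}-1)^2$. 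The heart of the matter is the estimate
$$
(\mu d+\alpha_{j+1}-1)^2-(\mu d+\alpha_j)^2\le a+kd,
$$
which, combined with $(\mu d+\alpha_j)^2\le(kd-\lambda^*)(a+kd)$ from \eqref{eq:definition}, yields $(i+kd)^2\le(kd-\lambda^*+1)(a+kd)$ and contradicts the previous inequality. Thus no gap lies in the band, and every gap satisfies $i+kd\ge\mu d+\alpha_{j+1}=i^*+kd$, i.e. $i\ge i^*$, as required; note that this also settles all negative $i$, since $i<0\le i^*$ places them below the band.

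The main obstacle is exactly this last estimate, and it is here that the hypothesis $a+kd\ge 4kd^3$ enters. Using $\alpha_{j+1}-\alpha_j<d$ together with $\mu d+\alpha_j\le\sqrt{(kd-\lambda^*)(a+kd)}\le\sqrt{kd(a+kd)}$ from \eqref{eq:definition}, the left–hand side factors as $(\alpha_{j+1}-1-\alpha_j)\bigl(2\mu d+\alpha_{j+1}-1+\alpha_j\bigr)$ and is bounded by a quantity of order $2d\sqrt{kd(a+kd)}=2\sqrt{kd^3}\,\sqrt{a+kd}$; the condition $4kd^3\le a+kd$ is precisely what forces $2\sqrt{kd^3}\le\sqrt{a+kd}$ and hence the whole difference to be at most $a+kd$. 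I expect the only delicate point in making this rigorous to be the bookkeeping of the lower–order terms $\alpha_j,\alpha_{j+1}$ — here Proposition~\ref{prop3} controls $\alpha_j+\alpha_{j+1}$ and the bound $\alpha_{j+1}-\alpha_j\le d-1$ controls the first factor — so that the inequality closes with the stated constant rather than a slightly larger one. The structural input of Propositions~\ref{prop3} and~\ref{prop4}, guaranteeing that $\alpha_j$ and $\alpha_{j+1}$ are genuinely consecutive maximal residues straddling $\sqrt{(kd-\lambda^*)(a+kd)}$, is what makes the band argument legitimate.
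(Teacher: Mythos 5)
Your treatment of the band $\mu d+\alpha_j<i+kd<\mu d+\alpha_{j+1}$ is correct and is essentially the paper's Case~2, and your route to the key inequality is actually cleaner: bounding $\mu d+\alpha_j\le\sqrt{(kd-\lambda^*)(a+kd)}\le\sqrt{kd(a+kd)}\le\frac{a+kd}{2d}$ directly from \eqref{eq:definition} and the hypothesis $4kd^3\le a+kd$ avoids the paper's two-subcase analysis of whether $\mu$ equals or exceeds $2(kd-\lambda^*)$ (the paper instead proves $2d^2(\mu+1)\le a+kd$ and feeds it into $(\mu d+\alpha_{j+1})^2-(\mu d+\alpha_j)^2<2d^2(\mu+1)$). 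The bookkeeping you flag does close: with $\alpha_{j+1}-\alpha_j\le d-1$ and $\alpha_j+\alpha_{j+1}\le 2d$ from Lemma~\ref{lem1}, the difference $(\mu d+\alpha_{j+1}-1)^2-(\mu d+\alpha_j)^2$ is at most $(d-2)\bigl(\tfrac{a+kd}{d}+d-2\bigr)\le a+kd$, since $d(d-2)^2\le 2(a+kd)$.

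The genuine gap is in your disposal of $i\le-kd$. Your lower bound $\tfrac{i^2(a+kd)}{a}-d(a+kd)$ for the right-hand side of Proposition~\ref{prop2} beats $(i+kd)^2\le i^2$ only when $i^2\ge a(a+kd)/k$, i.e.\ roughly $|i|\ge a$, as you acknowledge; and your closing remark that ``$i<0\le i^*$ places them below the band'' is not sound for very negative $i$: when $i+kd<0$ the crude estimate only shows that a gap forces $|i+kd|>\mu d+\alpha_j$, which is exactly what happens for $i<-kd-\mu d-\alpha_j$. Since $\mu d+\alpha_j\le\frac{a+kd}{2d}<a$, the entire range $-(a+kd)<i<-kd-\mu d-\alpha_j$ --- squares lying roughly between $\bigl(a+\sqrt{kd(a+kd)}\bigr)^2$ and $(2a+kd)^2$ --- is covered by neither of your two arguments as written. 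The paper kills all of $i\le-kd$ at a stroke by quoting Roberts' formula for $\Frob{S_A}$ and checking $\Frob{S_A}<(a+kd)^2$, so every square at least $(a+kd)^2$ is automatically in $S_A$; you never invoke the Frobenius number. Your approach is nonetheless repairable without that input: do not discard the term $kd-\lambda_i\ge(k-1)d+1$, so that $\bigl(\lfloor\tfrac{i^2+\lambda_ia}{ad}\rfloor+k\bigr)d-\lambda_i>\tfrac{i^2}{a}+(k-1)d\ge\tfrac{i^2}{a}$, whence the right-hand side of Proposition~\ref{prop2} exceeds $\tfrac{i^2}{a}(a+kd)\ge i^2\ge(i+kd)^2$ for every $i\le-kd$. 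As submitted, though, the step does not cover the range it claims to.
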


We need the following lemma before proving Theorem~\ref{theo:ap}.

\begin{lem}\label{lem1}
If $d\ge3$ then
$$
\alpha_{i+1}-\alpha_i\le d-1\quad\text{and}\quad \alpha_i+\alpha_{i+1}\le 2d
$$
for all $i\in\{1,\ldots,n\}$.
\end{lem}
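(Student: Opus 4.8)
The plan is to split the range $i\in\{1,\ldots,n\}$ into the \emph{interior} indices $i\in\{1,\ldots,n-1\}$, for which both $\alpha_i$ and $\alpha_{i+1}$ lie in $\{0,1,\ldots,d-1\}$, and the single \emph{boundary} index $i=n$, where the convention $\alpha_{n+1}=d+\alpha_1$ introduces a wrap-around. Both inequalities will then be verified in each case; the only step that needs anything beyond trivial size estimates is the boundary case, which I would handle using Propositions~\ref{prop3} and~\ref{prop4}.

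For the interior indices I would first record that $\lambda^*>0$: since $\gcd(1,d)=1$ with $d\ge 2$, Remark~\ref{rem1} gives $\lambda_1>0$, hence $\lambda^*\ge\lambda_1>0$. Because $\lambda_0=0$, the index $0$ is not among the $\alpha_i$, so $1\le\alpha_1<\alpha_2<\cdots<\alpha_n\le d-1$. For $i\in\{1,\ldots,n-1\}$ this at once yields $\alpha_{i+1}-\alpha_i\le\alpha_{i+1}-\alpha_1\le(d-1)-1=d-2\le d-1$ and $\alpha_i+\alpha_{i+1}\le(d-2)+(d-1)=2d-3\le 2d$, so both inequalities hold with room to spare.

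For the boundary index $i=n$ I would use $\alpha_{n+1}=d+\alpha_1$ together with the symmetry $\alpha_1+\alpha_n=d$ furnished by Proposition~\ref{prop3} (its case $i=1$). This gives $\alpha_n+\alpha_{n+1}=\alpha_n+d+\alpha_1=2d$, so the second inequality holds (with equality), and $\alpha_{n+1}-\alpha_n=d+\alpha_1-\alpha_n=2d-2\alpha_n$. By Proposition~\ref{prop4} we have $\alpha_n>\frac{d}{2}$ (this is where $d\ge 3$ enters), hence $2\alpha_n>d$ and therefore $\alpha_{n+1}-\alpha_n=2d-2\alpha_n<d$; being an integer, this difference is at most $d-1$, which is the first inequality.

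Since nothing more than elementary comparisons is required once the structural facts $\lambda^*>0$, $\alpha_1+\alpha_n=d$, and $\alpha_n>\frac{d}{2}$ are in hand, I do not expect a genuine obstacle. The only point demanding care is to not overlook the wrap-around contribution $\alpha_{n+1}=d+\alpha_1$ at $i=n$, which is precisely where Propositions~\ref{prop3} and~\ref{prop4} are invoked.
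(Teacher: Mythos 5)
Your proposal is correct and follows essentially the same route as the paper: the interior indices $i\le n-1$ are handled by the trivial bounds $1\le\alpha_1<\cdots<\alpha_n\le d-1$, and the wrap-around case $i=n$ is settled via Proposition~\ref{prop3} (giving $\alpha_n+\alpha_{n+1}=2d$) and Proposition~\ref{prop4} (giving $\alpha_{n+1}-\alpha_n<d$). The only cosmetic difference is that the paper bounds $\alpha_{n+1}-\alpha_n=d+\alpha_1-\alpha_n<d$ directly from $\alpha_n>\alpha_1$, whereas you pass through $2d-2\alpha_n$ and $\alpha_n>\frac{d}{2}$, which is equivalent.
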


\begin{proof}
First, let $i\in\{1,\ldots,n-1\}$. Since $1\le \alpha_j\le d-1$, for all $j\in\{1,\ldots,n\}$, from Remark~\ref{rem1}, it follows that
$$
\alpha_{i+1}-\alpha_i < \alpha_{i+1} \le d-1\quad\text{and}\quad \alpha_i+\alpha_{i+1}<2d.
$$
Finally, for $i=n$, since $n\ge2$ and $\alpha_n>\alpha_1$ by Proposition~\ref{prop4}, it follows that
$$
\alpha_{n+1}-\alpha_n = d+\alpha_1-\alpha_n <d.
$$
Moreover, since $\alpha_n=d-\alpha_1$ by Proposition~\ref{prop3}, we obtain that
$$
\alpha_n+\alpha_{n+1} = (d-\alpha_1)+(d+\alpha_1)=2d.
$$ 
This completes the proof.
\end{proof}

We now have all the ingredients to prove Theorem~\ref{theo:ap}.

\begin{proof}[Proof of Theorem~\ref{theo:ap}]
It is known \cite{Rob} that
$$
\Frob{S_A}=\left(\left\lfloor \frac{a-2}{k}\right\rfloor +1\right)a+(d-1)(a-1)-1.
$$ 
Since $a^2>\left(\left\lfloor \frac{a-2}{k}\right\rfloor +1\right)a,\ 2akd>(d-1)(a-1)$ and $(kd)^2>0$ then
$$
\Frob{S_A}<a^2+2kda+(kd)^2=(a-(-kd))^2.
$$
Therefore, it is enough to show that $(a-i)^2\in S$ for all $-kd\le i<(\mu-k)d+\alpha_{j+1}$. 

We have two cases.
\setcounter{case}{0}

\begin{case}
$-kd \le i\le (\mu-k)d+\alpha_j$.

We have that
$$
\begin{array}{lll}
(i+kd)^2 & \le (\mu d+\alpha_j)^2 & (\text{since }  i\le (\mu-k)d+\alpha_j) \\
& \le (kd-\lambda^*)(a+kd)& \text{(by definition)} \\
& \le \left(\left(\left\lfloor\frac{i^2+\lambda_ia}{ad}\right\rfloor+k\right)d-\lambda_i\right)(a+kd) & (\text{since }\lambda^*\ge \lambda_i \text { and } \left\lfloor\frac{i^2+\lambda_ia}{ad}\right\rfloor \ge0)\\
\end{array}
$$
Therefore, by Proposition~\ref{prop2}, we obtain that $(a-i)^2\in S_A$.
\end{case}

\begin{case}
$(\mu-k)d+\alpha_j<i<(\mu-k)d+\alpha_{j+1}$.

In this case we have that $\alpha_j<i\bmod{d}<\alpha_{j+1}$ implying that $\lambda_i\le \lambda^*-1$ and thus
\begin{equation}\label{eq:ap0}
(kd-\lambda_i)(a+kd) \ge (kd-\lambda^*)(a+kd)+(a+kd).
\end{equation}
Moreover,
\begin{eqnarray}\label{eq:arr1}
(i+kd)^2 & < & \left(\mu d+\alpha_{j+1}\right)^2 \\
 & = & \left(\left(\mu d+\alpha_j\right)+\left(\alpha_{j+1}-\alpha_{j}\right)\right)^2 \nonumber \\
 &= & \left(\mu d+\alpha_j\right)^2 + \left(\alpha_{j+1}-\alpha_j\right)\left(2\left(\mu d+\alpha_j\right)+\left(\alpha_{j+1}-\alpha_j\right)\right) \nonumber\\
 &= & \left(\mu d+\alpha_j\right)^2 + \left(\alpha_{j+1}-\alpha_j\right)\left(2\mu d+\alpha_j+\alpha_{j+1}\right).\nonumber
\end{eqnarray}

Now, from Lemma~\ref{lem1}, we have that
\begin{equation}\label{eq:ap1b}
\alpha_{\ell+1}-\alpha_\ell<d\quad\text{and}\quad \alpha_\ell+\alpha_{\ell+1}\le 2d
\end{equation}
for all $\ell\in\{1,\ldots,n\}$.
Therefore, combining  \eqref{eq:arr1} and \eqref{eq:ap1b}, we obtain
\begin{equation}\label{eq:ap3}
(i+kd)^2 < \left(\mu d+\alpha_j\right)^2 + d\left(2\mu d +2d\right) =  \left(\mu d+\alpha_j\right)^2 + 2d^2\left(\mu+1\right)
\end{equation}
for a $j\in\{1,\ldots,n\}$.

Since
$$
 \left(\mu d+\alpha_j\right)^2 \stackrel{(by\ definition)}{\le} \left(kd-\lambda^*\right)(a+kd) \stackrel{\eqref{eq:ap0}}{\le} \left(kd-\lambda_i\right)(a+kd) - (a+kd)
$$
then
\begin{equation}\label{eq:ap4}
(i+kd)^2 <  \left(kd-\lambda_i\right)(a+kd) +2d^2\left(\mu+1\right) - (a+kd).
\end{equation}

We claim that 
\begin{equation}\label{eq:claim}
2d^2(\mu+1)\le a+kd.
\end{equation}
 
We have two subcases
\smallskip

{\it Subcase} $i)$ For $j\in\{1,\ldots,n-1\}$. Since ${\left(\mu d+\alpha_j\right)}^2\le (kd-\lambda^*)(a+kd)<{\left(\mu d+\alpha_{j+1}\right)}^2$, $\alpha_j\ge1$ and $\alpha_{j+1}\le \alpha_n<d$ then
$$
\mu = \left\lfloor\frac{\sqrt{(kd-\lambda^*)(a+kd)}}{d}\right\rfloor.
$$
Moreover, since $a+kd\ge 4kd^3>4(kd-\lambda^*)d^2$, it follows that
\begin{equation}\label{eq;mu}
\mu\ge2(kd-\lambda^*) \ \text{with } \lambda^*>0.
\end{equation}
If $\mu=2(kd-\lambda^*)$, then we have
$$
2d^2(\mu+1) = 4kd^3+2(1-2\lambda^*)d^2 \le 4kd^3 \le a+kd, 
$$
as announced. Otherwise, if $\mu>2(kd-\lambda^*)$, it follows that
{\footnotesize $$
(kd-\lambda^*)(a+kd)\ge\left(\mu d+\alpha_j\right)^2\stackrel{(\alpha_{j}\ge 1)}{>}\mu^2d^2 > \left(\mu^2-1\right)d^2 = \left(\mu-1\right)\left(\mu+1\right)d^2 \ge 2(kd-\lambda^*)(\mu+1)d^2,
$$}
obtaining the claimed inequality \eqref{eq:claim} for $j\in\{1,2,\ldots,n-1\}$. 

\smallskip
{\it Subcase} $ii)$
For $j=n$.  Since ${\left(\mu d+\alpha_n\right)}^2\le (kd-\lambda^*)(a+kd)<{\left(\mu d+\alpha_{n+1}\right)}^2$, where $\alpha_n=d-\alpha_1$ and $\alpha_{n+1}=d+\alpha_1$, we obtain
$$
{\left((\mu+1) d-\alpha_1\right)}^2\le (kd-\lambda^*)(a+kd)<{\left((\mu+1) d+\alpha_{1}\right)}^2.
$$
Since $\alpha_1<d$, we have
$$
\left\lfloor\frac{\sqrt{(kd-\lambda^*)(a+kd)}}{d}\right\rfloor\in\left\{\mu,\mu+1\right\}.
$$
Moreover, since $a+kd\ge 4kd^3>4(kd-\lambda^*)d^2$, it follows that
\begin{equation}\label{eq;mu2}
\mu+1 \ge 2(kd-\lambda^*).
\end{equation}
If $\mu+1=2(kd-\lambda^*)$, then we have
$$
2d^2(\mu+1)=4(kd-\lambda^*)d^2 < 4kd^3\le a+kd,
$$
obtaining the claimed inequality \eqref{eq:claim}. Otherwise, if $\mu+1>2(kd-\lambda^*)$, since $\alpha_1<\frac{d}{2}$ from Proposition~\ref{prop4}, we obtain 
$$
(kd-\lambda^*)(a+kd) \begin{array}[t]{l}
\ge\displaystyle\left((\mu+1)d-\alpha_1\right)^2> \left((\mu+1) d-\frac{d}{2}\right)^2 = \left(\mu^2+\mu +\frac{1}{4}\right)d^2 \\ \ \\
> \displaystyle\mu\left(\mu+1\right)d^2 \stackrel{\mu\ge2(kd-\lambda^*)}{\ge} 2(kd-\lambda^*)(\mu+1)d^2, \\
\end{array}
$$
obtaining the claimed inequality \eqref{eq:claim} when $j=n$. 
\smallskip

Finally, since inequality \eqref{eq:claim} is true for any $j\in\{1,\ldots,n\}$ then, from equation \eqref{eq:ap4} we have
$$
(i+kd)^2 <  \left(kd-\lambda_i\right)(a+kd) +2d^2\left(\mu+1\right) - (a+kd) \le \left(kd-\lambda_i\right)(a+kd).
$$
We deduce, by Proposition~\ref{prop2}, that $(a-i)^2\in S_A$.
\end{case}
This completes the proof.
\end{proof}

\begin{rem}
The above proof can be adapted if we consider the weaker condition $a+kd>4(kd-\lambda^*)d^2+d^2$ instead of $a+kd\ge 4kd^3$.
\end{rem}

We believe that the upper bound $\ub{a,d,k}$ of $\SG{S_A}$ given in Theorem \ref{theo:ap} is actually an equality. We are able to establish the latter  in the case when $k=1$ for any $d\ge 3$.

\begin{cor}\label{cor:ap}
Let $d\ge 3$ and $a+d\ge4d^3$. Then,
$$
\SG{ \left\langle a,a+d\right\rangle} = \ub{a,d,1}.
$$
\end{cor}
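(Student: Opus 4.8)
The plan is to pair the upper bound supplied by Theorem~\ref{theo:ap} with a matching lower bound. Applying that theorem with $k=1$ --- for which the hypothesis $a+kd\ge 4kd^3$ is exactly $a+d\ge 4d^3$ --- already gives $\SG{\ger{a,a+d}}\le \ub{a,d,1}$. Writing $i^{*}=(\mu-1)d+\alpha_{j+1}$, so that $\ub{a,d,1}=(a-i^{*})^2$ by definition, it therefore suffices to show that the perfect square $(a-i^{*})^2$ does \emph{not} lie in $S_A=\ger{a,a+d}$: this yields $\SG{S_A}\ge (a-i^{*})^2=\ub{a,d,1}$, and hence equality.

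To prove $(a-i^{*})^2\notin S_A$ I would invoke Proposition~\ref{prop2} with $k=1$ and $i=i^{*}$. The first observation is that $\lambda_{i^{*}}=\lambda^{*}$: if $j<n$ then $i^{*}\equiv\alpha_{j+1}\pmod d$ with $\alpha_{j+1}\in\{\alpha_1,\dots,\alpha_n\}$, whereas if $j=n$ then $\alpha_{n+1}=d+\alpha_1$ forces $i^{*}=\mu d+\alpha_1\equiv\alpha_1\pmod d$; in either case $i^{*}$ is congruent modulo $d$ to an index realizing the maximum $\lambda^{*}$. Granting this, Proposition~\ref{prop2} states that $(a-i^{*})^2\in S_A$ if and only if $(i^{*}+d)^2\le\left(\left(\lfloor \tfrac{(i^{*})^2+\lambda^{*}a}{ad}\rfloor+1\right)d-\lambda^{*}\right)(a+d)$.

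The heart of the argument is to show that the floor term is $0$, equivalently $(i^{*})^2<a(d-\lambda^{*})$, which collapses the right-hand side to $(d-\lambda^{*})(a+d)$. For this I would use that $i^{*}$ sits strictly below $\mu d+\alpha_j$ (when $j<n$), respectively below $\mu d+\alpha_n$ (when $j=n$): by Lemma~\ref{lem1} one has $\mu d+\alpha_j-i^{*}=d-(\alpha_{j+1}-\alpha_j)\ge 1$, and by Proposition~\ref{prop4} together with Proposition~\ref{prop3} one has $\mu d+\alpha_n-i^{*}=d-2\alpha_1\ge 1$ since $\alpha_1<\tfrac{d}{2}$. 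Squaring this and feeding in $(\mu d+\alpha_j)^2\le(d-\lambda^{*})(a+d)$ from \eqref{eq:definition}, the required estimate reduces to $d(d-\lambda^{*})+1<2(\mu d+\alpha_j)$ (and its analogue with $\alpha_n$). This is exactly where the hypothesis $a+d\ge 4d^3$ is spent, via the lower bounds on $\mu$ already extracted in the proof of Theorem~\ref{theo:ap} --- namely $\mu\ge 2(d-\lambda^{*})$ when $j<n$ and $\mu+1\ge 2(d-\lambda^{*})$ when $j=n$, see \eqref{eq;mu} and \eqref{eq;mu2}. Since $d\ge 3$ and $1\le d-\lambda^{*}\le d-1$ (the lower bound because $\lambda^{*}\ge\lambda_1>0$ by Remark~\ref{rem1}), both inequalities close by a one-line computation.

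With the floor equal to $0$ and $\lambda_{i^{*}}=\lambda^{*}$, Proposition~\ref{prop2} reduces the membership question to whether $(i^{*}+d)^2\le(d-\lambda^{*})(a+d)$. But $i^{*}+d=\mu d+\alpha_{j+1}$ (using $\alpha_{n+1}=d+\alpha_1$ when $j=n$), and the defining inequality \eqref{eq:definition} gives $(\mu d+\alpha_{j+1})^2>(d-\lambda^{*})(a+d)$ strictly; hence $(a-i^{*})^2\notin S_A$, establishing the lower bound and the corollary. I expect the only real obstacle to be the floor-vanishing step, and within it the boundary case $j=n$, where $i^{*}$ wraps around through $\alpha_{n+1}=d+\alpha_1$ and one must lean on $\alpha_1<\tfrac{d}{2}$ rather than on the Lemma~\ref{lem1} gap bound.
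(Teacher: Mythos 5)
Your proposal is correct and follows essentially the same route as the paper: the upper bound from Theorem~\ref{theo:ap} with $k=1$, the choice $i^{*}=(\mu-1)d+\alpha_{j+1}$, the verification that $\lambda_{i^{*}}=\lambda^{*}$ and that the floor term vanishes (via $(i^{*})^2<(d-\lambda^{*})a$, using Lemma~\ref{lem1}, Propositions~\ref{prop3}--\ref{prop4} and the bounds \eqref{eq;mu}, \eqref{eq;mu2} on $\mu$), and the concluding strict inequality $(\mu d+\alpha_{j+1})^2>(d-\lambda^{*})(a+d)$ from \eqref{eq:definition} fed into Proposition~\ref{prop2}. The only difference is cosmetic: you bound $i^{*}\le\mu d+\alpha_j-1$ and square, whereas the paper expands $(i^{*})^2=(\mu d+\alpha_j)^2-(d+\alpha_j-\alpha_{j+1})((2\mu-1)d+\alpha_j+\alpha_{j+1})$; both reduce to the same use of $\mu\ge 2(d-\lambda^{*})-1$.
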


\begin{proof}
By Theorem~\ref{theo:ap}, we have $\SG{ \left\langle a,a+d\right\rangle} \le {\left(a-\left((\mu-1)d+\alpha_{j+1}\right)\right)}^2$. It is thus enough to show that  ${\left(a-\left((\mu-1)d+\alpha_{j+1}\right)\right)}^2\not\in \left\langle a,a+d\right\rangle$. 

Let $i= (\mu-1)d+\alpha_{j+1}$. We have 
$$
i^2
\begin{array}[t]{l}
 = \left((\mu-1)d+\alpha_{j+1}\right)^2 \\[2ex]
 = \left(\left(\mu d+\alpha_j\right)-\left(d+\alpha_j-\alpha_{j+1}\right)\right)^2 \\[2ex]
 = \left(\mu d+\alpha_j\right)^2 - \left(d+\alpha_j-\alpha_{j+1}\right)\left(2\left(\mu d+\alpha_j\right)-\left(d+\alpha_j-\alpha_{j+1}\right)\right) \\[2ex]
 = \left(\mu d+\alpha_j\right)^2 - \left(d+\alpha_j-\alpha_{j+1}\right)\left((2\mu-1)d+\alpha_j+\alpha_{j+1}\right).
\end{array}
$$
Since $d+\alpha_j-\alpha_{j+1}\ge 1$, by Lemma~\ref{lem1}, and $(2\mu-1)d+\alpha_j+\alpha_{j+1}>(2\mu-1)d$, it follows that
\begin{equation}\label{eq:1aa}
i^2 < \left(\mu d+\alpha_j\right)^2 - (2\mu-1)d \le (d-\lambda^*)(a+d) - (2\mu-1)d.
\end{equation}

Since $a+d\ge 4d^3$, we already know that $\mu+1\ge 2(d-\lambda^*)$ (see equations \eqref{eq;mu} and \eqref{eq;mu2} with $k=1$). It follows that $\mu\ge2(d-\lambda^*)-1\ge1$ and then 
\begin{equation}\label{eq:1ac}
d-\lambda^* \le \frac{\mu+1}{2} \le 2\mu-1.
\end{equation}

By combining equations \eqref{eq:1aa} and \eqref{eq:1ac} we obtain
$$
i^2 < (d-\lambda^*)(a+d) - (d-\lambda^*)d = (d-\lambda^*)a
$$
and
$$
\frac{i^2+\lambda_ia}{ad} = \frac{i^2+\lambda^*a}{ad} < \frac{(d-\lambda^*)a+\lambda^*a}{ad} = 1.
$$
We may thus deduce that
$$
\left\lfloor\frac{i^2+\lambda_ia}{ad}\right\rfloor = 0.
$$
Finally, since
$$
(i+d)^2 = \left(\mu d+\alpha_{j+1}\right)^2 > (d-\lambda^*)(a+d) = \left(\left(\left\lfloor\frac{i^2+\lambda_i a}{ad}\right\rfloor+1\right)d-\lambda_i\right)(a+d),
$$
we deduce, from Proposition~\ref{prop2}, that $(a-i)^2\not\in \left\langle a,a+d\right\rangle$, as desired.
\end{proof}

Unfortunately, the value of $\SG{\left\langle a,a+d\right\rangle}$ given in the above corollary does not hold in general (if the condition $a+d\ge4d^3$ is not satisfied). However, as we will see below,  the number of values of $a$ not holding the equality $\SG{ \left\langle a,a+d\right\rangle} = \ub{a,d,1}$ is finite for each fixed $d$.

%%%%%%%%%%%%%%%%%%%%%%%%%%
\section{Formulas for \texorpdfstring{$\ger{a,a+d}$}{<a,a+d>} with small \texorpdfstring{$d\ge3$}{d>=3}}\label{sec:2gen345}
%%%%%%%%%%%%%%%%%%%%%%%%%%

In this section, we investigate the value of $\SG{\ger{a, a+d}}$ when $d$ is small.

For any positive integer $d\ge3$, we may define the set $E(d)$ to be the set of integers $a$ coprime to $d$ not holding the equality of Corollary~\ref{cor:ap}, that is, 
$$
E(d) := \left\{ a\in\N\setminus\{0,1\}\ \middle|\ \gcd(a,d)=1\text{ and } \SG{\ger{a,a+d}} \neq \ub{a,d,1}\right\}.
$$
Since $\lambda^*\le d-1$ then, from Corollary~\ref{cor:ap}, we obtain that $E(d)\subset[2,4d^3-1]\cap\N$.  We completely determine the set $E(d)$ for a few values of $d\ge3$ by computer calculations, see Table~\ref{tab1}.
 
\begin{table}[htbp] 
\begin{center}
\begin{tabular}{|c|c|c|}
\hline
$d$ & $|E(d)|$ & $E(d)$ \\
\hline\hline
$3$ & $0$ & $\emptyset$ \\ 
\hline
$4$ & $0$ & $\emptyset$ \\ 
\hline
$5$ & $5$ & $\left\{2,4,13,27,32\right\}$ \\
\hline
$6$ & $0$ & $\emptyset$ \\ 
\hline
$7$ & $10$ & $\left\{2,3,4,9,16,18,19,23,30,114\right\}$ \\
\hline
$8$ & $5$ & $\left\{5,9,21,45,77\right\}$ \\
\hline
$9$ & $5$ & $\left\{2,4,7,8,16\right\}$ \\
\hline
$10$ & $14$ & $\left\{3,9,13,23,27,33,43,123,133,143,153,163,333,343\right\}$ \\
\hline
$11$ & $14$ & $\left\{2,3,4,5,7,8,9,14,16,18,25,36,38,47\right\}$ \\
\hline
$12$ & $9$ & $\left\{13,19,25,31,67,79,139,151,235\right\}$ \\
\hline
\end{tabular}
\end{center}
\caption{$E(d)$ for the first values of $d\ge3$.}\label{tab1}
\end{table}
 
The exact values of $\SG{\ger{a,a+d}}$ when $a\in E(d)$, for $d\in\{3,\ldots,12\}$, are given in Appendix~\ref{app:2gen}.
\smallskip

For each value $d\in\left\{3,\ldots,12\right\}$, an explicit formula for $\SG{\ger{a, a+d}}$ can be presented excluding the values given in Table~\ref{tab1}. The latter can be done by using (essentially) the same arguments as those applied in the proofs of Theorem~\ref{theo:ap} and Corollary~\ref{cor:ap}. We present the proof for the case $d=3$.  

\begin{thm}\label{thm:2gen3}
Let $a\ge 2$ be an integer not divisible by $3$ and let $S=\ger{a, a+3}$. Then,
$$
\SG{S} = \left\{
\begin{array}{ll}
(a-(3b-1))^2 & \text{if either } (3b+1)^2 \le a+3 < (3b+2)^2 \hspace*{.4cm} \text{ and } a\equiv1\bmod{3}\\ 
&\hspace*{.9cm} \text{or } \ (3b+1)^2 \le 2(a+3) < (3b+2)^2 \text{ and } a\equiv2\bmod{3},\\ \ \\
(a-(3b+1))^2 & \text{if either } (3b+2)^2 \le a+3 < (3b+4)^2 \hspace*{.4cm} \text{ and } a\equiv1\bmod{3} \\ 
&\hspace*{.9cm} \text{or }\ (3b+2)^2 \le 2(a+3) < (3b+4)^2 \text{and } a\equiv2\bmod{3}.
\end{array}
\right.
$$
\end{thm}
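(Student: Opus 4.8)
The plan is to specialize the machinery of Section~\ref{sec:arithprogr} to $d=3$ and $k=1$, and then to show that the equality of Corollary~\ref{cor:ap} in fact persists for \emph{every} admissible $a$, with no lower bound on $a$ whatsoever. Thus the displayed formula is simply the explicit evaluation of $\ub{a,3,1}$, and the genuine content of the theorem is the equality $\SG{S}=\ub{a,3,1}$ for all $a\ge2$ with $3\nmid a$ (equivalently, the fact that $E(3)=\emptyset$).

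First I would compute the data $\lambda_i,\lambda^*,\alpha_i$ explicitly. For $d=3$ one always has $\lambda_0=0$, and solving $\lambda_ia+i^2\equiv0\bmod3$ gives $\lambda_1=\lambda_2=2$ when $a\equiv1\bmod3$ and $\lambda_1=\lambda_2=1$ when $a\equiv2\bmod3$. Hence $\lambda^*=2$ (resp.\ $\lambda^*=1$), $n=2$, and $\alpha_1=1,\alpha_2=2,\alpha_3=4$, so that $(kd-\lambda^*)(a+kd)$ equals $a+3$ (resp.\ $2(a+3)$). Reading the unique index $j$ off the defining relation \eqref{eq:definition} with $\mu=b$, the two regimes $\alpha_j=1$ (with $\alpha_{j+1}=2$) and $\alpha_j=2$ (with $\alpha_{j+1}=4$) reproduce exactly the two intervals of the statement and the two values $\ub{a,3,1}=(a-(3b-1))^2$ and $(a-(3b+1))^2$.

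For the upper bound $\SG{S}\le\ub{a,3,1}$ I would rerun the proof of Theorem~\ref{theo:ap}. The reduction there, $\Frob{S_A}<(a+3)^2$, holds for every $a\ge2$ (the three inequalities used become $a>a-1$, $6a>2(a-1)$, $9>0$), so it suffices to show $(a-i)^2\in S$ for $-3\le i<i^{*}:=(\mu-1)3+\alpha_{j+1}$. The decisive simplification is that, since $\{\alpha_1,\alpha_2,\alpha_3\}=\{1,2,4\}$, the open interval $\bigl(3(\mu-1)+\alpha_j,\,3(\mu-1)+\alpha_{j+1}\bigr)$ contains an integer only when $j=2$, and then only $i=3\mu$, for which $i\bmod3=0$ and $\lambda_i=0$. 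Case~1 of Theorem~\ref{theo:ap} goes through verbatim and uses no lower bound on $a$. For the single Case~2 index $i=3\mu$, Proposition~\ref{prop2} requires $(3\mu+3)^2\le3\left(\left\lfloor\tfrac{3\mu^2}{a}\right\rfloor+1\right)(a+3)$; dropping the floor to $0$ it suffices to prove $3(\mu+1)^2\le a+3$, and this follows directly from the defining inequality $(3\mu+2)^2\le(kd-\lambda^*)(a+3)$ (which already bounds $a+3$ below by a quadratic in $\mu$). The resulting elementary inequality holds for every $\mu\ge0$, so no analogue of the hypothesis $a+kd\ge4kd^3$ is needed.

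Finally, for the lower bound I would adapt the proof of Corollary~\ref{cor:ap} to show $(a-i^{*})^2\notin S$. Here $i^{*}\bmod3\in\{1,2\}$, so $\lambda_{i^{*}}=\lambda^*$, and the crux is that the floor $\left\lfloor\tfrac{(i^{*})^2+\lambda^*a}{3a}\right\rfloor$ vanishes, i.e.\ $(i^{*})^2<(d-\lambda^*)a$; this I would verify directly from the defining inequality $(\mu d+\alpha_j)^2\le(d-\lambda^*)(a+3)$ in each of the four regimes, which in every case reduces to $\mu\ge1$ plus the explicitly checked boundary values (the small cases such as $\mu=0$, available only here, e.g.\ $a\in\{2\}$ or $a\in\{4,7,10\}$). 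Once the floor is $0$, Proposition~\ref{prop2} reduces non-membership to $(i^{*}+3)^2=(\mu d+\alpha_{j+1})^2>(d-\lambda^*)(a+3)$, which is precisely the right-hand strict inequality of \eqref{eq:definition}. The main obstacle is exactly this removal of the size hypothesis: one must confirm, case by case on the residue of $a$ and on the two intervals, that the small-$\mu$ values excluded by the generic argument still satisfy both the Case~2 inequality and the vanishing of the floor, which is what ultimately explains why $E(3)=\emptyset$.
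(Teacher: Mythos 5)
Your proposal is correct and follows essentially the same route as the paper: the paper's proof is precisely the specialization of Proposition~\ref{prop2} to $d=3$, $k=1$ with the explicit values $\lambda_i\in\{0,1,2\}$, followed by the same four-fold case analysis on $a\bmod 3$ and on which interval contains $a+3$ (resp.\ $2(a+3)$), showing membership for $i$ below the critical index and non-membership at it via the vanishing of the floor. Your framing through $\lambda^*$, the $\alpha_j$'s and $\ub{a,3,1}$ makes the link to Theorem~\ref{theo:ap} and Corollary~\ref{cor:ap} more explicit than the paper does, but the substance, including the separate treatment of the boundary case $\mu=b=0$, is the same.
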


\begin{proof}
Since  $\Frob{S}=(a-1)(a+2)-1=a^2+a-3<(a+1)^2$ then
$$
\SG{S}\le (a-1)^2.
$$
By Proposition~\ref{prop2}, we know that
\begin{equation}\label{eq:d3}
(a-i)^2 \in S \quad\Longleftrightarrow\quad (i+3)^2 \le \left(3\left\lfloor\frac{i^2+\lambda_i a}{3a}\right\rfloor+3-\lambda_i\right)(a+3),
\end{equation}
where $\lambda_i\in\{0,1,2\}$ such that $\lambda_i a +i^2 \equiv 0\bmod{3}$, that is,
$$
\lambda_i = \left\{\begin{array}{ll}
0 & \text{if } i\equiv 0\bmod{3} \text{ and } a\equiv1,2\bmod{3},\\
1 & \text{if } i\equiv 1,2\bmod{3} \text{ and } a\equiv2\bmod{3},\\
2 & \text{if } i\equiv 1,2\bmod{3} \text{ and } a\equiv1\bmod{3}.
\end{array}\right.
$$

We have four cases.
\setcounter{case}{0}

\begin{case}
Suppose that $a\equiv1\bmod{3}$ with $(3b+1)^2 \le a+3 < (3b+2)^2$. Note that $b\ge1$ since $a+3\ge 19$.\\
If $i\le 3b-2$ then
$$
(i+3)^2 \le (3b+1)^2 \le a+3 \le\left(3\left\lfloor\frac{i^2+\lambda_i a}{3a}\right\rfloor+3-\lambda_i\right)(a+3),
$$
obtaining, by equation~\eqref{eq:d3}, that $(a-i)^2\in S$. 
\smallskip

If $i=3b-1$ then
$$
i^2 = (3b-1)^2 = 9b^2-6b+1 \stackrel{b\ge1}{<} 9b^2+6b-2 = (3b+1)^2-3 \le a,
$$
obtaining that
$$
0\le \frac{i^2+\lambda_i a}{3a} = \frac{i^2+2a}{3a} < 1 \ \ (\text{since } 3b-1\equiv 2\bmod{3})
$$
and thus
$$
\left\lfloor\frac{i^2+\lambda_i a}{3a}\right\rfloor =0.
$$
Moreover, since
$$
\left(3\left\lfloor\frac{i^2+\lambda_i a}{3a}\right\rfloor+3-\lambda_i\right)(a+3) = a+3 < (3b+2)^2 = (i+3)^2,
$$
by equation \eqref{eq:d3}, we have that $(a-i)^2\notin S$.
\end{case}

\begin{case}
Suppose that $a\equiv1\bmod{3}$ with $(3b+2)^2 \le a+3 < (3b+4)^2$. If $b=0$, we have $(a-1)\notin S$ since
$$
\left(3\left\lfloor\frac{1+\lambda_i a}{3a}\right\rfloor+3-\lambda_i\right)(a+3) = a+3 < 4^2.
$$
Therefore $\SG{S}=(a-1)^2$ in this case. Suppose now that $b\ge1$.\\
If $i\le 3b-1$, then
$$
(i+3)^2 \le (3b+2)^2 \le a+3 \le\left(3\left\lfloor\frac{i^2+\lambda_i a}{3a}\right\rfloor+3-\lambda_i\right)(a+3),
$$
obtaining, by equation~\eqref{eq:d3}, that $(a-i)^2\in S$. 
\smallskip

If $i=3b$ then, using that $\lambda_i=0$,
$$
(i+3)^2 = (3b+3)^2 \le 3(3b+2)^2 \le 3(a+3) \le \left(3\left\lfloor\frac{i^2+\lambda_i a}{3a}\right\rfloor+3-\lambda_i\right)(a+3),
$$
obtaining, by equation~\eqref{eq:d3}, that $(a-i)^2\in S$.
\smallskip

If  $i=3b+1$ then
$$
i^2 = (3b+1)^2 = 9b^2+6b+1 \stackrel{b\ge1}{<} 9b^2+12b+1 = (3b+2)^2-3 \le a,
$$
obtaining that
$$
0\le \frac{i^2+\lambda_i a}{3a} = \frac{i^2+2a}{3a} < 1\ \ (\text{since } 3b+1\equiv 1\bmod{3})
$$
and thus
$$
\left\lfloor\frac{i^2+\lambda_i a}{3a}\right\rfloor =0.
$$
Moreover, since
$$
\left(3\left\lfloor\frac{i^2+\lambda_i a}{3a}\right\rfloor+3-\lambda_i\right)(a+3) = a+3 < (3b+4)^2 = (i+3)^2,
$$
therefore, by equation~\eqref{eq:d3}, we have that  $(a-i)^2\notin S$.
\end{case}

\begin{case}
Suppose that $a\equiv2\bmod{3}$ with $(3b+1)^2 \le 2(a+3) < (3b+2)^2$. Note that $b\ge1$ since $2(a+3)\ge 16$.\\
If $i\le 3b-2$ then
$$
(i+3)^2 \le (3b+1)^2 \le 2(a+3) \stackrel{\lambda_i\le 1}{\le}\left(3\left\lfloor\frac{i^2+\lambda_i a}{3a}\right\rfloor+3-\lambda_i\right)(a+3),
$$
obtaining, by equation~\eqref{eq:d3}, that $(a-i)^2\in S$. 
\smallskip

If $i=3b-1$ then
$$
i^2 = (3b-1)^2 = 9b^2-6b+1 \stackrel{b\ge1}{<} 9b^2+6b-5 = (3b+1)^2-6 \le 2a,
$$
obtaining that
$$
0\le \frac{i^2+\lambda_i a}{3a} = \frac{i^2+a}{3a} < 1\ \ (\text{since } 3b-1\equiv 2\bmod{3})
$$
and thus
$$
\left\lfloor\frac{i^2+\lambda_i a}{3a}\right\rfloor =0.
$$
Moreover, since
$$
\left(3\left\lfloor\frac{i^2+\lambda_i a}{3a}\right\rfloor+3-\lambda_i\right)(a+3) = 2(a+3) < (3b+2)^2 = (i+3)^2,
$$
therefore, by equation~\eqref{eq:d3}, we have that $(a-i)^2\notin S$.
\end{case}

\begin{case}
Suppose that $a\equiv2\bmod{3}$ with $(3b+2)^2 \le 2(a+3) < (3b+4)^2$. If $b=0$, we have $(a-1)\notin S$ since
$$
\left(3\left\lfloor\frac{1+\lambda_i a}{3a}\right\rfloor+3-\lambda_i\right)(a+3) = 2(a+3) < 4^2.
$$
Therefore $\SG{S}=(a-1)^2$ in this case. Suppose now that $b\ge1$.\\
If $i\le 3b-1$ then
$$
(i+3)^2 \le (3b+2)^2 \le 2(a+3) \le\left(3\left\lfloor\frac{i^2+\lambda_i a}{3a}\right\rfloor+3-\lambda_i\right)(a+3),
$$
obtaining, by equation~\eqref{eq:d3}, that $(a-i)^2\in S$. 
\smallskip

If $i=3b$ then, using that $\lambda_i=0$,
$$
(i+3)^2 = (3b+3)^2 \stackrel{b\ge1}{<} \frac{3}{2}(3b+2)^2 \le 3(a+3) \le \left(3\left\lfloor\frac{i^2+\lambda_i a}{3a}\right\rfloor+3-\lambda_i\right)(a+3).
$$
Therefore, by equation~\eqref{eq:d3}, we have $(a-i)^2\in S$. 
\smallskip

If $i=3b+1$ then
$$
i^2 = (3b+1)^2 = 9b^2+6b+1 \stackrel{b\ge1}{<} 9b^2+12b-2 = (3b+2)^2-6 \le 2a,
$$
when $b\ge1$ and clearly $i^2=1<2a$ when $b=0$, obtaining that
$$
0\le \frac{i^2+\lambda_i a}{3a} = \frac{i^2+a}{3a} < 1
$$
and
$$
\left\lfloor\frac{i^2+\lambda_i a}{3a}\right\rfloor =0.
$$
Moreover, since
$$
\left(3\left\lfloor\frac{i^2+\lambda_i a}{3a}\right\rfloor+3-\lambda_i\right)(a+3) = 2(a+3) < (3b+4)^2 = (i+3)^2,
$$
therefore, by equation\eqref{eq:d3}, we have that $(a-i)^2\notin S$.
\end{case}
\end{proof}

The proofs of the following two theorems are completely analogous to that of Theorem~\ref{thm:2gen3} with a larger number of cases to be analyzed (in each case, the appropriate inequality is obtained in order to apply Proposition~\ref{prop2}).  

\begin{thm}\label{thm:2gen4}
Let $a\ge 3$ be an odd integer and let $S=\ger{a, a+4}$. Then,
$$
\SG{S} = \left\{
\begin{array}{ll}
(a-(4b-1))^2 & \text{if either } (4b+1)^2 \le a+4 < (4b+3)^2 \hspace*{.4cm} \text{ and } a\equiv1\bmod{4} \\
&\hspace*{.9cm} \text{or } \ (4b+1)^2 \le 3(a+4) < (4b+3)^2 \text{and } a\equiv3\bmod{4},\\ \ \\
(a-(4b+1))^2 & \text{if either } (4b+3)^2 \le a+4 < (4b+5)^2 \hspace*{.4cm} \text{ and } a\equiv1\bmod{4}\\
&\hspace*{.9cm} \text{or } \ (4b+3)^2 \le 3(a+4) < (4b+5)^2 \text{and } a\equiv3\bmod{4}.\\ \ \\
\end{array}
\right.
$$
\end{thm}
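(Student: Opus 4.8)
The plan is to run the argument of Theorem~\ref{thm:2gen3} with $d=4$, $k=1$. First I would record the upper bound: since $S=\ger{a,a+4}$ has two coprime generators (here $a$ is odd, so $\gcd(a,4)=1$), we have $\Frob{S}=a(a+4)-a-(a+4)=a^2+2a-4<(a+1)^2$, and $a^2=a\cdot a\in S$. Hence every perfect square that is at least $a^2$ lies in $S$, so $\SG{S}\le (a-1)^2$. It then suffices to locate, among $i\ge 1$, the smallest index $i$ with $(a-i)^2\notin S$: since $(a-i)^2$ is decreasing in $i$ for $1\le i\le a$ and every square above $a^2$ is in $S$, that smallest failing index pins down $\SG{S}$.

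Next I would specialize Proposition~\ref{prop2} to $d=4$, $k=1$, namely
$$(a-i)^2\in S\iff (i+4)^2\le\left(4\left\lfloor\frac{i^2+\lambda_i a}{4a}\right\rfloor+4-\lambda_i\right)(a+4),$$
and compute $\lambda_i$ from $\lambda_i\equiv -a^{-1}i^2\bmod 4$. Since $a$ is odd, $i^2\equiv 0$ or $1\bmod 4$ according as $i$ is even or odd; thus for $a\equiv1\bmod4$ (where $a^{-1}\equiv1$) one gets $\lambda_i=0$ for $i$ even and $\lambda_i=3$ for $i$ odd, whence $\lambda^*=3$, while for $a\equiv3\bmod4$ (where $a^{-1}\equiv3$) one gets $\lambda_i=0$ for $i$ even and $\lambda_i=1$ for $i$ odd, whence $\lambda^*=1$. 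The binding instances of the criterion therefore come from the odd indices, where for small $i$ the right-hand side collapses to $(4-\lambda^*)(a+4)$, i.e. $a+4$ when $a\equiv1$ and $3(a+4)$ when $a\equiv3$. This is exactly why the two residue classes are governed by $a+4$ and by $3(a+4)$ in the statement, and why the thresholds $4b\pm1$ are odd.

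I would then split into the four cases of the statement (two residue classes, each subdivided by the position of $a+4$, resp. $3(a+4)$, among the consecutive \emph{odd} squares $(4b+1)^2,(4b+3)^2,(4b+5)^2$) and argue in each as in Theorem~\ref{thm:2gen3}. For every $i$ strictly below the claimed threshold I would show $(i+4)^2$ is small enough to satisfy the criterion, separating the cheap even indices (where $\lambda_i=0$ and the factor $4$ makes the bound $(i+4)^2\le 4(a+4)$ slack) from the odd indices (which actually control the range and use $(i+4)^2\le(4-\lambda^*)(a+4)$); the single ``middle'' even index $i=4b$ occurring in the second position of each residue class would be handled by a factor argument, as was done for $i=3b$ in Case~2 of Theorem~\ref{thm:2gen3}. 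At the threshold index $i$ itself I would check $\left\lfloor(i^2+\lambda^*a)/(4a)\right\rfloor=0$, so the right-hand side equals $(4-\lambda^*)(a+4)<(i+4)^2$, forcing $(a-i)^2\notin S$; the degenerate subcase $b=0$ (which yields $\SG{S}=(a-1)^2$) is treated separately, as in Cases~2 and~4 there.

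The routine-but-delicate part, and the main obstacle, is the bookkeeping that makes the floor terms vanish exactly where needed and that correctly handles the parity of $i$. Unlike $d=3$, here a single uniform estimate over all $i$ below the threshold fails: in Case~1 the largest index below the threshold, $i=4b-2$, is \emph{even} (small $\lambda_i$, large right-hand side) while the critical odd index $4b-3$ carries the full $\lambda^*=3$ but a smaller square, so one genuinely must bound the even and odd indices by different inequalities. Concretely, one must verify $i^2<(4-\lambda^*)a$ at the threshold and $i^2<4a$ at the subthreshold even indices, using $a+4\ge(4b+1)^2$ (resp. $3(a+4)\ge(4b+1)^2$) together with $b\ge1$; these are the analogues of the strict comparisons marked $\stackrel{b\ge1}{<}$ in Theorem~\ref{thm:2gen3}. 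Because $d=4$ produces more residues of $i$, the number of such one-line square comparisons roughly doubles, but each remains elementary.
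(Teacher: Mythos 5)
Your proposal is correct and follows exactly the route the paper intends: the paper gives no separate proof of this theorem, stating only that it is "completely analogous" to that of Theorem~\ref{thm:2gen3}, and your plan — specializing Proposition~\ref{prop2} to $d=4$, $k=1$, computing $\lambda_i=0$ for even $i$ and $\lambda_i=3$ (resp.\ $1$) for odd $i$ when $a\equiv1$ (resp.\ $3$) $\bmod\,4$, and then running the same case analysis with separate estimates for even and odd indices and a floor-vanishing check at the threshold — is precisely that adaptation. The case bookkeeping you flag as the delicate part is indeed all that remains, and each comparison is elementary as you say.
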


\begin{thm}\label{thm:2gen5}
Let $a\ge 2$ be an integer not divisible by $5$ and let $S=\ger{a, a+5}$. Then,
{\footnotesize $$
\SG{S} = \left\{
\begin{array}{ll}
1 & \text{if } a=2 \text{ or } 4,\\
10^2 & \text{if } a=13,\\
(a-6)^2 & \text{if } a=27 \text{ or } 32,\\
(a-(5b-2))^2 &\text{if either } (5b+2)^2 \le a+5 < (5b+3)^2 \hspace*{.4cm} \text{ and } a\equiv4\bmod{5}\\
&\hspace*{.9cm} \text{or }\ (5b+2)^2 \le 2(a+5) < (5b+3)^2 \text{and } a\equiv2\bmod{5},\\ \ \\
(a-(5b-1))^2 &\text{if either } (5b+1)^2 \le a+5 < (5b+4)^2 \hspace*{.4cm} \text{ and } a\equiv1\bmod{5} \\
&\hspace*{.9cm} \text{or } \ (5b+1)^2 \le 2(a+5) < (5b+4)^2 \text{and } a\equiv3\bmod{5}, a\neq 13,\\ \ \\
(a-(5b+1))^2 &\text{if either } (5b+4)^2 \le a+5 < (5b+6)^2 \hspace*{.4cm} \text{ and } a\equiv1\bmod{5} \\
&\hspace*{.9cm} \text{or }\ (5b+4)^2 \le 2(a+5) < (5b+6)^2 \text{and } a\equiv3\bmod{5},\\ \ \\
(a-(5b+2))^2 &\text{if either } (5b+3)^2 \le a+5 < (5b+7)^2 \hspace*{.4cm} \text{ and } a\equiv4\bmod{5},  a\neq 4\\\
&\hspace*{.9cm} \text{or }\ (5b+3)^2 \le 2(a+5) < (5b+7)^2 \text{and } a\equiv2\bmod{5}, a\neq 2,27,32.
\end{array}
\right.
$$}
\end{thm}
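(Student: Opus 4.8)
The plan is to follow the proof of Theorem~\ref{thm:2gen3} line for line, specializing Proposition~\ref{prop2} to $d=5$, $k=1$. This yields the membership test: $(a-i)^2\in S$ if and only if $(i+5)^2\le\left(5\lfloor(i^2+\lambda_i a)/(5a)\rfloor+5-\lambda_i\right)(a+5)$, where $\lambda_i\in\{0,1,2,3,4\}$ is determined by $\lambda_i a+i^2\equiv0\bmod 5$. The first task is to compute $\lambda_i$ explicitly. Since the quadratic residues mod $5$ are $\{0,1,4\}$, the value $\lambda_i$ depends only on $a\bmod 5$ and on whether $i^2\equiv0,1,$ or $4\bmod 5$; a short table gives $\lambda^*=4$ for $a\equiv1,4\bmod 5$ (attained at $i\equiv\pm1$, resp. $i\equiv\pm2$) and $\lambda^*=3$ for $a\equiv2,3\bmod 5$ (attained at $i\equiv\pm1$, resp. $i\equiv\pm2$). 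Because the coefficient multiplying $(a+5)$ in the critical instance of the test is $5-\lambda^*$, this is exactly the source of the factor $a+5$ (when $a\equiv1,4$) versus $2(a+5)$ (when $a\equiv2,3$) appearing throughout the statement.

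Second, I would fix the range of indices to search. Here $d=5$ introduces one feature absent for $d=3$: from $\Frob{S}=(a-1)(a+4)-1=a^2+3a-5$ one only gets $\Frob{S}<(a+2)^2$, hence $\SG{S}\le(a+1)^2$, so one must look for the smallest $i\ge-1$ with $(a-i)^2\notin S$ (for $d=3$ one could start at $i=1$). This is precisely why the statement allows $b=0$, producing the index $5b-1=-1$ and the value $(a+1)^2$; one also checks directly that $(a+2)^2$, and therefore every larger square, always lies in $S$.

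Third, for each residue class $a\bmod 5$ I would locate the first gap. For the non-critical residues $\lambda_i<\lambda^*$, so the right-hand side of the test is strictly larger, and since the relevant indices are of order $\sqrt a$ the floor term vanishes and the inequality $(i+5)^2\le(5-\lambda_i)(a+5)$ holds with room once $a$ is large; these squares therefore lie in $S$. The gap is thus forced by the critical residues, occurring at the smallest critical $i$ with $(i+5)^2>(5-\lambda^*)(a+5)$. Writing the two critical residues as $5b+c$ and $5b+(5-c)$, with $c=1$ for $a\equiv1,3$ and $c=2$ for $a\equiv2,4$, the position of $\sqrt{(5-\lambda^*)(a+5)}$ relative to these lands in one of two consecutive sub-intervals, reproducing exactly the interval conditions and the gap indices $5b-c$ and $5b+c$ of the four generic cases. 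As in Theorem~\ref{thm:2gen3}, one must additionally confirm that the critical and non-critical indices lying strictly between the previous candidate and the true gap are genuinely in $S$.

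Finally, the estimate ``non-critical squares lie in $S$'' needs $a$ large enough, and it fails for a handful of small semigroups, which turn out to be exactly $a\in\{2,4,13,27,32\}$, the set $E(5)$ of Table~\ref{tab1}. For instance at $a=13$ the non-critical index $i=3$ already gives $(3+5)^2=64>3(13+5)=54$, so $(13-3)^2=100\notin S$, an earlier gap than the generic formula predicts. These finitely many values are verified by direct computation, giving the lines $\SG{S}=1$ ($a=2,4$), $10^2$ ($a=13$) and $(a-6)^2$ ($a=27,32$), together with the exclusions $a\neq2,4,13,27,32$ in the generic cases. The main obstacle is not conceptual but the bookkeeping: four residue classes of $a$, each split into two generic sub-ranges whose critical residues are unevenly spaced (the gaps between consecutive critical residues alternate between $2$ and $3$), plus the borderline verification that each floor term indeed vanishes and the careful isolation of the exceptional set $E(5)$.
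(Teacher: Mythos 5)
Your proposal is correct and follows essentially the same route as the paper, which explicitly states that Theorem~\ref{thm:2gen5} is proved "completely analogously" to Theorem~\ref{thm:2gen3}: specialize Proposition~\ref{prop2} to $d=5$, $k=1$, tabulate $\lambda_i$ by residue class (your values $\lambda^*=4$ for $a\equiv1,4$ and $\lambda^*=3$ for $a\equiv2,3$, with the stated critical residues, are correct and explain the $(a+5)$ versus $2(a+5)$ dichotomy), locate the first failing index starting from $i=-1$, and isolate the finite exceptional set $E(5)=\{2,4,13,27,32\}$ by direct computation. Your worked check at $a=13$ ($i=3$, $64>54$) is accurate, and the remaining work is exactly the case bookkeeping the paper alludes to.
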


%%%%%%%%%%%%%%%%%%%%%%%
%%%%%%%%%%%%%%%%%%%%%%%
\section{Study of \texorpdfstring{$\ger{a,a+1}$}{<a,a+1>}}\label{sec:2gen1}
%%%%%%%%%%%%%%%%%%%%%%%
%%%%%%%%%%%%%%%%%%%%%%%

We investigate the square Frobenius number of $\ger{a,a+1}$ with $a\ge 2$. We first study the case when neither $a$ nor $a+1$ is a square integer.

\begin{prop}\label{prop:2gen1}
Let $a$ be a positive integer such that $b^2<a<a+1<(b+1)^2$ for some integer $b\ge 1$. Then,
$$
\SG{\ger{a,a+1}} = (a-b)^2.
$$
\end{prop}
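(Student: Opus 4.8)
The plan is to apply Proposition~\ref{prop2} with $d=1$ and $k=1$, since $\ger{a,a+1}=S_A$ in this case. Because $d=1$, the congruence $\lambda_i a+i^2\equiv0\bmod1$ is satisfied by $\lambda_i=0$ for every $i$, so the criterion of Proposition~\ref{prop2} collapses to
$$
(a-i)^2\in\ger{a,a+1}\iff (i+1)^2\le\left(\left\lfloor\frac{i^2}{a}\right\rfloor+1\right)(a+1).
$$
First I would record that $a^2=a\cdot a\in\ger{a,a+1}$, and more generally that every square at least $a^2$ lies in the semigroup, since $\Frob{\ger{a,a+1}}=a^2-a-1<a^2$. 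Hence the largest square missing from $\ger{a,a+1}$ must be of the form $(a-i)^2$ with $i\ge1$, and it suffices to locate the smallest such $i$ for which $(a-i)^2\notin\ger{a,a+1}$, as the smallest $i$ corresponds to the largest square.

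The key observation is that in the relevant range the floor term vanishes: the hypothesis $b^2<a$ gives $i^2\le b^2<a$ for every $i\le b$, so $\lfloor i^2/a\rfloor=0$ and the membership criterion reduces to the clean inequality $(i+1)^2\le a+1$. I would then split into two cases. For $1\le i\le b-1$ we have $i+1\le b$, hence $(i+1)^2\le b^2<a+1$, so each such square lies in the semigroup; this shows every square strictly between $(a-b)^2$ and $a^2$ belongs to $\ger{a,a+1}$ (the case $b=1$ being vacuous here, which is harmless). For $i=b$ the hypothesis $a+1<(b+1)^2$ gives $(b+1)^2>a+1$, so the criterion fails and $(a-b)^2\notin\ger{a,a+1}$. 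Combining the two cases, $(a-b)^2$ is the largest square not in the semigroup, which is exactly the claimed value of $\SG{\ger{a,a+1}}$.

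There is essentially no hard step here: the whole argument rests on the fact that for $d=1$ the quantities $\lambda_i$ vanish identically and, thanks to $b^2<a$, so does the floor correction $\lfloor i^2/a\rfloor$ for all $i\le b$. The only point demanding care is the bookkeeping guaranteeing that no larger square has been overlooked --- this is handled once and for all by the remark that $a^2$ and all bigger squares lie in $\ger{a,a+1}$ --- after which the two elementary inequalities $(i+1)^2\le a+1$ (for $i<b$) and $(b+1)^2>a+1$ (for $i=b$) pin down the answer exactly.
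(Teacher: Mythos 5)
Your proof is correct. The only real difference from the paper's argument is one of packaging: you specialize Proposition~\ref{prop2} to $d=k=1$ (which is legitimate --- that proposition carries no lower bound on $d$, and with $d=1$ one indeed has $\lambda_i=0$ for every $i$), whereas the paper re-derives the membership criterion from scratch via the identity $(a-i)^2=(a-2i-i^2)a+i^2(a+1)$ and checks the signs of the two coefficients. The two routes yield literally the same inequality: once the floor term is killed by $i^2\le b^2<a$, your criterion $(i+1)^2\le a+1$ is exactly the paper's condition $a-2i-i^2\ge0$, and your preliminary reduction (all squares $\ge a^2$ exceed $\Frob{\ger{a,a+1}}=a^2-a-1$, so one only needs the smallest $i\ge1$ with $(a-i)^2\notin\ger{a,a+1}$) matches the paper's opening step. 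What your version buys is uniformity with the machinery of Section~2; what the paper's version buys is a short self-contained argument independent of that machinery.
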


\begin{proof}
Since $\Frob{\ger{a,a+1}}=a^2-a-1$ then
$$
(a-1)^2 \le \Frob{\ger{a,a+1}} < a^2.
$$

We thus have that $\SG{\ger{a,a+1}} < a^2$. We shall show that $(a-i)^2\in \ger{a,a+1}$ for $i\in\left\{1,2,\ldots,b-1\right\}$.

We first observe that 
\begin{equation}\label{eq1}
(a-i)^2 = a^2-2ai+i^2 = (a-2i)a + i^2 = (a-2i-i^2)a + i^2(a+1),
\end{equation}
for any integer $i$.

Since for any $i\in\left\{1,2,\ldots,b-1\right\}$ we have
$$
a-2i-i^2 = a-i(i+2) \ge a-(b-1)(b+1) = a-b^2+1 > 0
$$
and
$$
i^2>0
$$  
then, by \eqref{eq1}, we deduce that $(a-i)^2\in \left\langle a,a+1 \right\rangle$ for any $i\in\left\{1,2,\ldots,b-1\right\}$. 
\smallskip

Finally, since $a+1<(b+1)^2$ (implying that $a-2b-b^2<0$) and $0< b^2<a$ then we may deduce, from \eqref{eq1}, that $(a-b)^2\not\in\ger{a,a+1}$. 
\end{proof}

Let $(u_n)_{n\ge1}$ be the recursive sequence defined by 
\begin{equation}\label{eq:recursive}
u_1=1, u_2=2, u_3=3, u_{2n} = u_{2n-1}+u_{2n-2}\text{ and } u_{2n+1} = u_{2n}+u_{2n-2}\text{ for all } n\ge2.
\end{equation} 
\smallskip

The first few values of $(u_n)_{n\ge1}$ are
$$
1 , 2 , 3 , 5 , 7 , 12 , 17 , 29 , 41 , 70 , 99 , 169 , 239 , 408 , 577 , 985 , \ldots\ldots
$$

This sequence appears in a number of other contexts. For instance, it corresponds to the {\em denominators of Farey fraction approximations to $\sqrt{2}$}, where the fractions are $\frac{1}{1}$, $\frac{2}{1}$, $\frac{3}{2}$, $\frac{4}{3}$, $\frac{7}{5}$, $\frac{10}{7}$, $\frac{17}{12}$, $\frac{24}{17}\dots$, see \cite{oeis}.
\smallskip

We pose the following conjecture in the case when either $a$ or $a+1$ is a square integer.

\begin{conj}\label{conj:2gen1}
Let $(u_n)_{n\ge1}$ be the recursive sequence given in \eqref{eq:recursive}.
\smallskip

\noindent 
If $a=b^2$ for some integer $b\ge 1$ then
$$
\SG{\ger{a,a+1}} = \left\{
\begin{array}{ll}
\left(a-\left\lfloor b\sqrt{2} \right\rfloor\right)^2 & \text{if } b\not\in\displaystyle\bigcup_{n\ge0}\left\{u_{4n+1},u_{4n+2}\right\}, \\ \ \\
\left(a-\left\lfloor b\sqrt{3} \right\rfloor\right)^2 & \text{if } b\in\displaystyle\bigcup_{n\ge0}\left\{u_{4n+1},u_{4n+2}\right\}.
\end{array}
\right.
$$
If $a+1=b^2$ for some integer $b\ge 1$ then
$$
\SG{\ger{a,a+1}} = \left\{
\begin{array}{ll}
\left(a-\left\lfloor b\sqrt{2} \right\rfloor\right)^2 & \text{if } b\not\in\displaystyle\bigcup_{n\ge1}\left\{u_{4n-1},u_{4n}\right\}, \\ \ \\
\left(a-\left\lfloor b\sqrt{3} \right\rfloor\right)^2 & \text{if } b\in\displaystyle\bigcup_{n\ge1}\left\{u_{4n},u_{4n+3}\right\}, \\ \ \\
2^2 & \text{if } b = u_3 = 3.
\end{array}
\right.
$$
\end{conj}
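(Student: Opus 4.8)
The plan is to reduce everything to a single clean inequality and then to a Diophantine (Pell) analysis. First I would record the membership test for $\ger{a,a+1}$: writing $M=xa+y(a+1)=(x+y)a+y$, one sees that a non-negative integer $M$ lies in $\ger{a,a+1}$ if and only if $(M\bmod a)\le\lfloor M/a\rfloor$, equivalently $M\le\lfloor M/a\rfloor(a+1)$. Applying this to $M=(a-i)^2=a(a-2i)+i^2$ and setting $q=\lfloor i^2/a\rfloor$ yields, after simplification, the key equivalence
$$(a-i)^2\in\ger{a,a+1}\iff (i+1)^2\le(q+1)(a+1),\qquad q=\left\lfloor\frac{i^2}{a}\right\rfloor.$$
Since $\Frob{\ger{a,a+1}}=a^2-a-1<a^2$, every square $\ge a^2$ already lies in the semigroup, so computing $\SG{\ger{a,a+1}}$ amounts to finding the smallest $i\ge1$ with $(a-i)^2\notin\ger{a,a+1}$.

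For the case $a=b^2$ I would partition the range of $i$ by the value $t:=q=\lfloor i^2/b^2\rfloor$, so that on the block $\lceil\sqrt{t}\,b\rceil\le i\le\lfloor\sqrt{t+1}\,b\rfloor$ the test reads $(i+1)^2\le(t+1)(b^2+1)$. For $t=0$ this forces membership for all $i\le b-1$, and within each block the first possible failure can occur only at its top endpoint $i=\lfloor\sqrt{t+1}\,b\rfloor$, because for smaller $i$ one has $(i+1)^2\le\lfloor\sqrt{t+1}\,b\rfloor^2\le(t+1)b^2<(t+1)(b^2+1)$. Thus the first non-member sits at $i=\lfloor\sqrt2\,b\rfloor$ precisely when $(\lfloor\sqrt2\,b\rfloor+1)^2>2(b^2+1)$; as $(\lfloor\sqrt2\,b\rfloor+1)^2>2b^2$ always holds, this fails exactly when $2b^2+1$ or $2b^2+2$ is a perfect square, in which case $(a-\lfloor\sqrt2\,b\rfloor)^2$ is still in the semigroup and the search moves to the block $t=2$, where the same reasoning pinpoints $i=\lfloor\sqrt3\,b\rfloor$.

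It then remains to translate ``$2b^2+1$ or $2b^2+2$ is a square'' into the stated membership $b\in\bigcup\{u_{4n+1},u_{4n+2}\}$. I would prove the standard identifications $u_{2n}=P_{n+1}$ and $u_{2n-1}=H_n$, where $P_n$ and $H_n$ are the Pell and half-companion Pell numbers obeying $H_n^2-2P_n^2=(-1)^n$; then $2b^2+1=\square$ is the Pell equation $x^2-2b^2=1$, whose $b$-values are exactly $P_{2m}=u_{4m-2}$, while $2b^2+2=\square$ reduces to $b^2-2c^2=-1$, whose solutions are $b=H_{2m+1}=u_{4m+1}$. In the exceptional block $t=2$ one checks that $3b^2+2$ and $3b^2+3$ are never squares (they are $\equiv2\pmod3$ or force $b^2\equiv2\pmod3$), so the only obstruction to stopping at $\lfloor\sqrt3\,b\rfloor$ is $3b^2+1$ being a square, i.e.\ $b$ a solution of $x^2-3b^2=1$.

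The main obstacle is precisely here: to guarantee the answer never escapes past $\sqrt3$ into $\sqrt4,\sqrt5,\dots$, I must show that no exceptional $b\ge2$ (a member of the $u$-subsequences, equivalently a solution of $x^2-2b^2=1$ or $b^2-2c^2=-1$) is simultaneously a solution of $x^2-3b^2=1$. This is a statement about common solutions of two independent Pell equations, true but not formal, and the kind of coincidence-exclusion that is the real content (and a plausible reason the result is posed as a conjecture rather than a theorem); I would attempt it by exhibiting a modulus on which the two solution sequences occupy disjoint residue classes. Finally, the case $a+1=b^2$ runs through the identical scheme with $a=b^2-1$ and $q=\lfloor i^2/(b^2-1)\rfloor$; the shift by $1$ perturbs the block endpoints and produces the index sets $\{u_{4n-1},u_{4n}\}$ and $\{u_{4n},u_{4n+3}\}$ together with the isolated anomaly $b=u_3=3$ (where $\SG{\ger{8,9}}=2^2$), and verifying that these are the only exceptions is the delicate bookkeeping that would complete the argument.
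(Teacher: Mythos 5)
The first thing to say is that the paper does not prove this statement: it is posed as Conjecture~\ref{conj:2gen1} and supported only by computer verification up to $10^6$, so there is no proof of the authors' to compare yours against. Judged on its own, your reduction is correct and genuinely useful as far as it goes. The membership test $(a-i)^2\in\ger{a,a+1}\iff(i+1)^2\le(\lfloor i^2/a\rfloor+1)(a+1)$ is exactly the paper's Proposition~\ref{prop2} specialized to $d=k=1$ (where every $\lambda_i=0$); the block analysis showing that the first non-member must be the top endpoint $\lfloor\sqrt{t+1}\,b\rfloor$ of some block is sound; and the translation of ``$2b^2+1$ or $2b^2+2$ is a perfect square'' into the Pell equations $c^2-2b^2=1$ and $b^2-2c^2=-1$ does recover the index sets $\{u_{4n+1},u_{4n+2}\}$, as does the observation that $3b^2+2$ and $3b^2+3$ are never squares.

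But there is a genuine gap, and you have located it yourself: everything hinges on showing that no exceptional $b\ge2$ (i.e.\ no $b$ with $c^2-2b^2=1$ or $b^2-2c^2=-1$ solvable) also satisfies $x^2-3b^2=1$, and this is asserted, not proved. The congruence strategy you propose does not obviously work: for instance, modulo $8$ the $b$-values with $3b^2+1$ a square lie in $\{0,1,4,7\}$, while the solutions of $c^2-2b^2=1$ lie in $\{0,2,4,6\}$ and those of $b^2-2c^2=-1$ in $\{1,7\}$, so the residue classes are not disjoint; what is really needed is a result on simultaneous Pell equations (finiteness and determination of common solutions of $x^2-3b^2=1$ with each of the two Pell conditions), which is a substantial number-theoretic ingredient, not a routine verification. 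A second, smaller gap is the case $a+1=b^2$, which you dispatch as ``the identical scheme'' plus ``delicate bookkeeping'': your own anomaly $b=3$ (where $\SG{\ger{8,9}}=2^2$, so the first failure escapes past the block $t=2$) shows that the shifted endpoints behave differently and that a separate family of exclusion statements must be proved there. In short: a correct and promising reduction of the conjecture to explicit Diophantine non-coincidences, but not a proof, and the unproved step is precisely the hard part.
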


The formulas of Conjecture~\ref{conj:2gen1} have been verified by computer for all integers $a\ge 2$ up to $10^6$.

%%%%%%%%%%%%%%%%%%%%%%%
%%%%%%%%%%%%%%%%%%%%%%%
\section{Study of \texorpdfstring{$\ger{a,a+2}$}{<a,a+2>}}\label{sec:2gen2}
%%%%%%%%%%%%%%%%%%%%%%%
%%%%%%%%%%%%%%%%%%%%%%%

We investigate the square Frobenius number of $\ger{a,a+2}$ with $a\ge3$ odd. We first study the case when neither $a$ nor $a+2$ is a square integer.

\begin{prop}\label{prop:2gen2}
Let $a\ge3$ be an odd integer such that $(2b+1)^2<a<a+2<(2b+3)^2$ for some integer $b\ge 1$. Then,
$$
\SG{\ger{a,a+2}} = (a-(2b+1))^2.
$$
\end{prop}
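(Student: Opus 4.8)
The plan is to follow the template of Proposition~\ref{prop:2gen1}: first bound $\SG{\ger{a,a+2}}$ below $a^2$ using the ordinary Frobenius number, then decide membership of each square $(a-i)^2$ via Lemma~\ref{prop1} (with $d=2$, $k=1$), and finally read off the largest square that fails to lie in $S$.

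First, since $a$ is odd, $\gcd(a,a+2)=1$ and the two-generator formula gives $\Frob{\ger{a,a+2}}=a(a+2)-a-(a+2)=a^2-2<a^2$. Hence every integer $\ge a^2-1$, and in particular every perfect square $\ge a^2$, belongs to $S$; moreover $a^2=a\cdot a\in S$. Consequently $\SG{\ger{a,a+2}}<a^2$, and it suffices to examine the squares $(a-i)^2$ with $1\le i\le 2b+1$. Concretely, I would prove that $(a-i)^2\in S$ for every $i\in\{1,\dots,2b\}$ while $(a-(2b+1))^2\notin S$; since these are exactly the squares lying in the interval $\bigl((a-(2b+1))^2,a^2\bigr]$, this identifies $\SG{\ger{a,a+2}}=(a-(2b+1))^2$.

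The core step is the membership test, and this is where the hypothesis that $a$ is odd is essential. Writing $(a-i)^2=(a-2i)a+i^2$ and using that $i\le 2b+1$ forces $i^2\le(2b+1)^2<a$, I would put $M=(a-i)^2$ in the canonical form $M=aX+2Y$ with $0\le Y\le a-1$ required by Lemma~\ref{prop1}, splitting on the parity of $i$. For even $i$ one gets $Y=\tfrac{i^2}{2}$ and $X=a-2i$, so the criterion $Y\le X$ becomes $i^2+4i\le 2a$; for odd $i$ one borrows a single copy of $a$ (legitimate precisely because $a$ odd makes $i^2+a$ even), giving $Y=\tfrac{i^2+a}{2}$ and $X=a-2i-1$, so the criterion becomes $i^2+4i+2\le a$. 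Both thresholds are increasing in $i$.

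It remains to evaluate these criteria at the relevant indices, which is routine. For $i\le 2b$ the extremal cases are $i=2b$ (even), needing $2b^2+4b\le a$, and $i=2b-1$ (odd), needing $4b^2+4b-1\le a$; both follow at once from $a>(2b+1)^2=4b^2+4b+1$, so all these squares lie in $S$. For $i=2b+1$ (odd) the criterion would demand $a\ge(2b+1)^2+4(2b+1)+2=4b^2+12b+7$, whereas the upper hypothesis $a+2<(2b+3)^2=4b^2+12b+9$ forces $a\le 4b^2+12b+6$; hence the test fails and $(a-(2b+1))^2\notin S$. The only delicate point is this last estimate, where the bound $a+2<(2b+3)^2$ is used sharply — it is exactly what separates membership from non-membership at $i=2b+1$ — together with the parity bookkeeping for odd $i$, which is the structural reason the statement is restricted to odd $a$.
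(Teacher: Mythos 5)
Your proposal is correct and follows essentially the same route as the paper: the same Frobenius bound $\Frob{\ger{a,a+2}}=a^2-2$, the same parity split with the same ``borrow one copy of $a$'' trick for odd $i$ (which is where oddness of $a$ enters), and the same boundary inequalities at $i=2b$, $i=2b-1$ and $i=2b+1$. The only cosmetic difference is that you run everything through Lemma~\ref{prop1} with $d=2$, $k=1$, whereas the paper writes the explicit nonnegative decompositions \eqref{eq2} and \eqref{eq3} directly; the coefficient $\frac{a+2-(2i+3)^2}{2}$ there is exactly your $X-Y$, so the computations coincide.
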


\begin{proof}
Since $\Frob{\ger{a,a+2}}=(a-1)(a+1)-1=a^2-2$ then
$$
(a-1)^2 < \Frob{\ger{a,a+2}} < a^2.
$$
We thus have that $\SG{\ger{a,a+2}} < a^2$. We shall show that $(a-i)^2\in\ger{a,a+2}$ for $i\in\left\{1,2,\ldots,2b\right\}$.

We first observe that for any integer $i$, we have
\begin{equation}\label{eq2}
(a-2i)^2 = a^2-4ai+4i^2 = (a-4i)a + 4i^2 = (a-4i-2i^2)a + 2i^2(a+2).
\end{equation}

Since for any $i\in\left\{1,2,\ldots,b\right\}$ we have
$$
a-4i-2i^2 = a-2i(i+2) \ge a-2i(2i+1) > a-(2i+1)^2 \ge a-(2b+1)^2 > 0
$$
and
$$
2i^2>0
$$
then, by \eqref{eq2}, it follows that $(a-2i)^2\in\ger{a,a+2}$ for any $i\in\left\{1,2,\ldots,b\right\}$.
\smallskip

Moreover, for any integer $i$, we have
\begin{equation}\label{eq3}
(a-(2i+1))^2 \begin{array}[t]{l}
= a^2 -2a(2i+1) + (2i+1)^2 = (a-2(2i+1))a + (2i+1)^2 \\
= (a-4i-3)a + (2i+1)^2+a \\
= \left(a-4i-3-\frac{(2i+1)^2+a}{2}\right)a + \frac{(2i+1)^2+a}{2}(a+2) \\
= \frac{a-4i^2-12i-7}{2}a + \frac{(2i+1)^2+a}{2}(a+2) \\
= \frac{a+2-(2i+3)^2}{2}a + \frac{(2i+1)^2+a}{2}(a+2).
\end{array}
\end{equation}
Note that $a+2-(2i+3)^2$ and $(2i+1)^2+a$ are even because $a$ is odd.

Since, for any $i\in\left\{0,1,\ldots,b-1\right\}$ we have
$$
\frac{a+2-(2i+3)^2}{2} \ge \frac{a+2-(2b+1)^2}{2} >0
$$
and
$$
\frac{(2i+1)^2+a}{2}>0
$$
then it follows, from \eqref{eq3} ,that $(a-(2i+1))^2\in\ger{a,a+2}$, for any $i\in\left\{0,1,\ldots,b-1\right\}$. 
\smallskip

Finally, since
$$
0<\frac{(2b+1)^2+a}{2}<a
$$
and
$$
\frac{a+2-(2b+3)^2}{2} <0,
$$
then we have, from \eqref{eq3}, that $(a-(2b+1))^2\not\in\ger{a,a+2}$.
\end{proof}

We pose the following conjecture in the case when either $a$ or $a+2$ is a square integer.
 
\begin{conj}\label{conj:2gen2}
Let $(u_n)_{n\ge1}$ be the recursive sequence given in \eqref{eq:recursive}.
\smallskip

\noindent
If  $a=(2b+1)^2$ for some integer $b\ge 1$ then 
$$
\SG{\ger{a,a+2}} = \left\{
\begin{array}{cl}
\left(a-2\left\lfloor\frac{(2b+1)\sqrt{2}}{2}\right\rfloor\right)^2 & \text{if } (2b+1)\not\in\displaystyle\bigcup_{n\ge1}\left\{u_{4n+1}\right\}, \\ \ \\
\left(a-\left\lfloor (2b+1)\sqrt{3} \right\rfloor\right)^2 & \text{if } (2b+1)\in\displaystyle\bigcup_{n\ge2}\left\{u_{4n+1}\right\}, \\ \ \\
38^2 & \text{if } 2b+1=u_5=7.
\end{array}
\right.
$$
If  $a+2=(2b+1)^2$ for some integer $b\ge 1$ then 
$$
\SG{\ger{a,a+2}} = \left\{
\begin{array}{ll}
\left(a-2\left\lfloor\frac{(2b+1)\sqrt{2}}{2}\right\rfloor\right)^2 & \text{if } (2b+1)\not\in\displaystyle\bigcup_{n\ge0}\left\{u_{4n+3}\right\}, \\ \ \\
\left(a-\left\lfloor (2b+1)\sqrt{3} \right\rfloor\right)^2 & \text{if } (2b+1)\in\displaystyle\bigcup_{n\ge0}\left\{u_{4n+3}\right\}.
\end{array}
\right.
$$
\end{conj}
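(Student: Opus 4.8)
The plan is to turn this conjecture into a sharp determination of the \emph{first square gap} and then to isolate a purely Diophantine condition on $\sqrt{2}$. Since $\Frob{\ger{a,a+2}}=a^{2}-2<a^{2}$, every perfect square $\geq a^{2}$ lies in $\ger{a,a+2}$, while every perfect square below $a^{2}$ is $(a-i)^{2}$ for a unique $i\geq 1$; hence $\SG{\ger{a,a+2}}=(a-i^{\star})^{2}$ where $i^{\star}=\min\{i\geq 1:(a-i)^{2}\notin\ger{a,a+2}\}$, and the entire problem is to compute $i^{\star}$. I would feed this into Proposition~\ref{prop2} with $d=2$: as $a$ is odd one has $\lambda_{i}=0$ for even $i$ and $\lambda_{i}=1$ for odd $i$, so that $(a-i)^{2}\in\ger{a,a+2}$ iff $(i+2)^{2}\leq\bigl(2\lfloor\frac{i^{2}+\lambda_{i}a}{2a}\rfloor+2-\lambda_{i}\bigr)(a+2)$, which I would then analyse by parity of $i$, exactly as in the case analysis of Theorem~\ref{thm:2gen3}.

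Writing $a=c^{2}$ with $c=2b+1$ (respectively $a=c^{2}-2$), the next step is to track the value of the floor $\lfloor\frac{i^{2}+\lambda_{i}a}{2a}\rfloor$, which is constant on intervals delimited by $i^{2}\in\{a,2a,3a,\dots\}$, i.e.\ by $i\in\{c,c\sqrt{2},c\sqrt{3},\dots\}$. On each such block the membership condition reads $(i+2)^{2}\leq m(a+2)$ for a fixed integer $m$; since the left side grows with $i$ while the right side is constant, $(a-i)^{2}$ leaves $\ger{a,a+2}$ near the top of each block and re-enters at the next floor jump. A direct computation (via the representation $(a-i)^{2}=(a-2i)a+i^{2}$ reduced modulo $2a$) gives, for even $i$ in the first block, the condition $(i+2)^{2}\leq 2(a+2)$, with threshold $i\approx\sqrt{2a}=c\sqrt{2}$; and for odd $i$, a transition from $(i+2)^{2}\leq a+2$ to $(i+2)^{2}\leq 3(a+2)$ at $i=c$, with threshold $i\approx\sqrt{3a}=c\sqrt{3}$. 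Because $c\sqrt{2}<c\sqrt{3}$, the \emph{generic} first gap is even and sits at the largest even integer not exceeding $\sqrt{2a}$, namely $i^{\star}=2\lfloor\frac{c\sqrt{2}}{2}\rfloor$, which reproduces the main value $(a-2\lfloor\frac{(2b+1)\sqrt{2}}{2}\rfloor)^{2}$. Confirming that this is genuinely the first gap requires checking $(a-i)^{2}\in\ger{a,a+2}$ for every smaller $i$ of both parities; this is the routine (if lengthy) part, following the template of Theorem~\ref{thm:2gen3}.

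The crux, and the main obstacle, is the exceptional $\sqrt{3}$ regime. The even candidate $i^{\star}=2\lfloor\frac{c\sqrt{2}}{2}\rfloor$ fails to be a gap precisely when $(i^{\star}+2)^{2}\leq 2(a+2)$, i.e.\ when the smallest even integer exceeding $\sqrt{2a}$ still lies in the narrow window $(\sqrt{2a},\sqrt{2(a+2)}\,]$ of width $\sim\sqrt{2}/c$. This happens exactly when $c\sqrt{2}$ is abnormally close to an even integer $2m$, equivalently when $c/m$ is an exceptionally good rational approximation of $\sqrt{2}$ — and these are the numerators/denominators of the convergents of $\sqrt{2}$, i.e.\ the terms $u_{n}$ of the Pell-type recursion \eqref{eq:recursive}. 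In that dangerous regime no even gap occurs before $i\approx 2c$, so $i^{\star}$ is forced out to the first odd gap near $\sqrt{3a}=c\sqrt{3}$, giving the alternative value $(a-\lfloor c\sqrt{3}\rfloor)^{2}$. Making this exact is the hard step: one must prove that the dangerous set is precisely $\bigcup_{n}\{u_{4n+1}\}$ (respectively $\bigcup_{n}\{u_{4n+3}\}$ when $a+2=c^{2}$, the shift of $a$ by $2$ swapping the over- and under-approximating convergents), which demands controlling the fractional parts $\{c\sqrt{2}\}$ through the continued fraction of $\sqrt{2}$ (or the three-distance theorem) finely enough to separate the window width $\sqrt{2}/c$ from the approximation defect $|c\sqrt{2}-2m|$. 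The sporadic entries, such as $2b+1=u_{5}=7$ giving $38^{2}$, arise when the odd threshold $\sqrt{3a}$ itself falls in its own critical window, and would be settled by a finite direct verification.
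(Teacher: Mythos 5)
The statement you set out to prove is Conjecture~\ref{conj:2gen2}; the paper offers no proof of it, only computer verification for all odd $a$ up to $10^6$, so there is nothing of the authors' to compare your argument against --- the question is simply whether your proposal closes an open problem, and it does not. Your framework is the right one and matches the machinery the paper uses elsewhere: specialize Proposition~\ref{prop2} to $d=2$, note $\lambda_i=0$ for even $i$ and $\lambda_i=1$ for odd $i$, and track the floor $\left\lfloor (i^2+\lambda_i a)/(2a)\right\rfloor$ block by block as in Theorem~\ref{thm:2gen3}. Your identification of the generic first gap at $i^\star=2\left\lfloor c\sqrt{2}/2\right\rfloor$ (with $c=2b+1$) and of the exceptional regime with unusually good rational approximations to $\sqrt{2}$ is the correct heuristic picture. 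But at the decisive moment you write that ``making this exact is the hard step'' and then do not take it. The core of the conjecture is precisely the claim that the even candidate fails --- i.e.\ that $\bigl(2\lfloor c\sqrt{2}/2\rfloor+2\bigr)^2\le 2(a+2)$ --- exactly when $c$ lies in $\bigcup_{n}\{u_{4n+1}\}$ (resp.\ $\bigcup_{n}\{u_{4n+3}\}$ when $a+2=c^2$). That is a sharp Diophantine statement about the distance from $c\sqrt{2}$ to the nearest \emph{even} integer, requiring one to explain why only every other convergent subsequence appears, why the parity constraint selects it, and why the window of width roughly $2\sqrt{2}/c$ captures those $c$ and no others. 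Nothing in your sketch establishes this; it is asserted by analogy with Pell approximation.

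Two further gaps would remain even if that equivalence were granted. First, in the exceptional regime you must show the answer actually jumps to $\left(a-\lfloor c\sqrt{3}\rfloor\right)^2$, which requires verifying that no even $i$ strictly between $c\sqrt{2}$ and $c\sqrt{3}$ is a gap once the floor has incremented (the even condition becomes $(i+2)^2\le 4(a+2)$ there, with threshold near $2c>c\sqrt{3}$, so this is plausible but unproved), and that the first odd failure lands exactly at $\lfloor c\sqrt{3}\rfloor$ rather than at a neighbouring odd integer. Second, the sporadic value $38^2$ at $c=u_5=7$ (i.e.\ $a=49$) matches neither displayed formula; your explanation of it is a guess, and, more seriously, you give no argument that it is the \emph{only} sporadic case --- excluding infinitely many further exceptions of the same kind is itself a nontrivial finiteness statement. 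As it stands your text is a credible research plan for attacking the conjecture, not a proof of it.
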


The formulas of Conjecture~\ref{conj:2gen2} have been verified by computer for all odd integers $a\ge 3$ up to $10^6$.

%%%%%%%%%%%%%%%%%%%%%%%
%%%%%%%%%%%%%%%%%%%%%%%
\section{Concluding remarks}\label{sec:concluding}
%%%%%%%%%%%%%%%%%%%%%%%
%%%%%%%%%%%%%%%%%%%%%%%

In the process of investigating square Frobenius numbers different problems arose.  
We naturally consider the $P$-type function $\KRR{S}=\KR{S}$ defined as,
$$
\KR{S}:= \text{ the smallest perfect $k$-power integer belonging to } S.
$$
It is clear that 
\begin{equation}\label{eq;MG}
s\le \KR{S}\le s^k
\end{equation}
where $s$ is the multiplicity of $S$.

\begin{thm}
Let $S_A=\ger{a,a+d,\ldots,a+kd}$ where $a,d,k$ are positive integers with $\gcd (a,d)=1$. 
If $d\le \frac{ak}{1+2k}$ then 
$$
\SR{S_A}\le (a-d)^2.
$$
\end{thm}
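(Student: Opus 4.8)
The plan is to verify the bound by exhibiting $(a-d)^2$ explicitly as an element of $S_A$. Since $\SR{S_A}$ is by definition the smallest perfect square lying in $S_A$, once I know $(a-d)^2\in S_A$ the inequality $\SR{S_A}\le(a-d)^2$ is immediate. First I would extract from the hypothesis the elementary fact that $d<\tfrac a2$: for every $k\ge1$ one has $\frac{k}{1+2k}<\frac12$, so $d\le\frac{ak}{1+2k}<\frac a2$. In particular $a-2d>0$ and $0\le d\le a-1$; moreover $a-d>\tfrac a2\ge\tfrac32$ forces $a-d\ge 2$, so $(a-d)^2\ge4$ is a genuine perfect square.

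Next I would produce the canonical decomposition demanded by Lemma~\ref{prop1}. Writing
$$
(a-d)^2=a^2-2ad+d^2=(a-2d)\,a+d\cdot d,
$$
I set $x=a-2d$ and $y=d$. Because $\gcd(a,d)=1$, the integer $y\in\{0,\ldots,a-1\}$ in Lemma~\ref{prop1} is the unique residue with $dy\equiv(a-d)^2\equiv d^2\pmod a$, namely $y\equiv d\pmod a$; since $0\le d\le a-1$ this gives exactly $y=d$, and then $x=\bigl((a-d)^2-d^2\bigr)/a=a-2d\ge0$ by the first step. Thus the displayed equation is precisely the distinguished representation $M=ax+dy$ with $0\le y\le a-1$.

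Finally I would invoke the criterion of Lemma~\ref{prop1}: $(a-d)^2\in S_A$ if and only if $y\le kx$, that is $d\le k(a-2d)$. Rearranging yields $d(1+2k)\le ka$, which is exactly the hypothesis $d\le\frac{ak}{1+2k}$. Hence $(a-d)^2\in S_A$, giving $\SR{S_A}\le(a-d)^2$. The argument is entirely mechanical once the representation $(a-d)^2=(a-2d)a+d\cdot d$ is spotted; the only steps needing care are confirming that $y=d$ is the distinguished residue in $\{0,\ldots,a-1\}$ and that $x=a-2d\ge0$, both of which rest on the inequality $d<a/2$ read off from the hypothesis. One could equally route the computation through Proposition~\ref{prop2} with $i=d$, where $\lambda_d=0$ and $\lfloor d/a\rfloor=0$, arriving at the same equivalence $d(k+1)^2\le kd(a+kd)$, i.e. $d(1+2k)\le ka$.
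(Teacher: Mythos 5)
Your proof is correct and takes essentially the same route as the paper: the paper applies Proposition~\ref{prop2} with $i=d$ (where $\lambda_d=0$ and $\left\lfloor d/a\right\rfloor=0$) and reduces to $d(1+2k)\le ak$, which is exactly the inequality you reach by feeding the representation $(a-d)^2=(a-2d)a+d\cdot d$ directly into Lemma~\ref{prop1}. The only blemish is a typo in your closing aside, where the condition from Proposition~\ref{prop2} should read $d(k+1)^2\le k(a+kd)$ rather than $d(k+1)^2\le kd(a+kd)$; it does not affect the main argument.
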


\begin{proof}
We shall use the characterization given in Proposition \ref{prop2} with $i=d$. In this case $\lambda_d=0$ and $d\le\frac{ak}{1+2k}<a$ thus
$$
\left(\left(\left\lfloor\frac{d^2+0 a}{ad}\right\rfloor+k\right)d-0\right)(a+kd)=\left(\left(\left\lfloor\frac{d}{a}\right\rfloor+k\right)d-0\right)(a+kd)= akd+(kd)^2.
$$
Thus, 
$$
(d+kd)^2\le akd+(kd)^2\iff d^2+2kd^2\le akd \iff d\le \frac{ak}{1+2k}.
$$ 
Therefore, by Proposition \ref{prop2}, $(a-d)^2\in S_A$.
\end{proof}

\begin{prob}
Let $k\ge 2$ be an integer and let $S$ be a numerical semigroup. Investigate the computational complexity to determine $\GG{S}$ and/or $\KR{S}$. 
\end{prob}

Or more ambitious,

\begin{quest}
Let $k\ge 2$ be an integer. Is there a closed formula for $\GG{S}$ and/or $\KR{S}$ for any semigroup $S$?
\end{quest}

Perhaps a first step on this direction might be the following.

\begin{prob}
Give a formula for $\SG{\ger{F_i, F_j}}$ and/or $\SR{\ger{F_i, F_j}}$ with $\gcd(F_i,F_j)=1$ where $F_k$ denotes the $k^{th}$ Fibonacci number. What about $\SG{\ger {a^2,b^2}}$ where $a$ and $b$ are relatively prime integers ? We clearly have that $\SR{\ger {a^2,b^2}}=a^2$ for $1\le a<b$.
\end{prob}

%%%%%%%%%%%%%%%%%%%%%%%%%%%%
%%%%%%%%%%%%%%%%%%%%%%%%%%%%

%%%%%%%%%%%%%%%%%%%%%%%%%%%%
%%%%%%%%%%%%%%%%%%%%%%%%%%%%
\newpage
\appendix
\section{Complement to formulas for \texorpdfstring{$\ger{a,a+d}$}{<a,a+d>} with small \texorpdfstring{$d\ge3$}{d>=3}}\label{app:2gen}
%%%%%%%%%%%%%%%%%%%%%%%%%%%%
%%%%%%%%%%%%%%%%%%%%%%%%%%%%
In Tabular~\ref{tab2}, we compare the exact values of $\SG{\ger{a,a+d}}$ and the formula $\ub{a,d,1}$, when $a\in E(d)$ for $d\in\{3,\ldots,12\}$.

\begin{table}[htbp]
\begin{center}
\begin{tabular}{cc}
\begin{tabular}{|c|c|c|c|}
\hline
$d$ & $a$ & $\SG{\ger{a,a+d}}$ & $\ub{a,d,1}$ \\
\hline
\hline
$5$ & $2$ & $1$ & $0$ \\
\hline
$5$ & $4$ & $1$ & $2^2$ \\
\hline
$5$ & $13$ & $10^2$ & $9^2$ \\
\hline
$5$ & $27$ & $21^2$ & $20^2$ \\
\hline
$5$ & $32$ & $26^2$ & $25^2$ \\
\hline
$7$ & $2$ & $1$ & $5^2$ \\
\hline
$7$ & $3$ & $2^2$ & $0$ \\
\hline
$7$ & $4$ & $5^2$ & $6^2$ \\
\hline
$7$ & $9$ & $7^2$ & $6^2$ \\
\hline
$7$ & $16$ & $14^2$ & $13^2$ \\
\hline
$7$ & $18$ & $17^2$ & $16^2$ \\
\hline
$7$ & $19$ & $14^2$ & $13^2$ \\
\hline
$7$ & $23$ & $21^2$ & $20^2$ \\
\hline
$7$ & $30$ & $28^2$ & $27^2$ \\
\hline
$7$ & $114$ & $105^2$ & $104^2$ \\
\hline
$8$ & $5$ & $4^2$ & $3^2$ \\
\hline
$8$ & $9$ & $10^2$ & $12^2$ \\
\hline
$8$ & $21$ & $16^2$ & $15^2$ \\
\hline
$8$ & $45$ & $36^2$ & $35^2$ \\
\hline
$8$ & $77$ & $64^2$ & $63^2$ \\
\hline
$9$ & $2$ & $3^2$ & $4^2$ \\
\hline
$9$ & $4$ & $3^2$ & $6^2$ \\
\hline
$9$ & $7$ & $6^2$ & $11^2$ \\
\hline
$9$ & $8$ & $6^2$ & $4^2$ \\
\hline
$9$ & $16$ & $9^2$ & $12^2$ \\
\hline
$10$ & $3$ & $2^2$ & $7^2$ \\
\hline
$10$ & $9$ & $7^2$ & $12^2$ \\
\hline
$10$ & $13$ & $10^2$ & $9^2$ \\
\hline
$10$ & $23$ & $20^2$ & $19^2$ \\
\hline
$10$ & $27$ & $26^2$ & $25^2$ \\
\hline
$10$ & $33$ & $30^2$ & $29^2$ \\
\hline
\end{tabular}
&
\begin{tabular}{|c|c|c|c|}
\hline
$d$ & $a$ & $\SG{\ger{a,a+d}}$ & $\ub{a,d,1}$ \\
\hline
\hline
$10$ & $43$ & $40^2$ & $39^2$ \\
\hline
$10$ & $123$ & $110^2$ & $109^2$ \\
\hline
$10$ & $133$ & $120^2$ & $119^2$ \\
\hline
$10$ & $143$ & $130^2$ & $129^2$ \\
\hline
$10$ & $153$ & $140^2$ & $139^2$ \\
\hline
$10$ & $163$ & $150^2$ & $149^2$ \\
\hline
$10$ & $333$ & $310^2$ & $309^2$ \\
\hline
$10$ & $343$ & $320^2$ & $319^2$ \\
\hline
$11$ & $2$ & $3^2$ & $4^2$ \\
\hline
$11$ & $3$ & $5^2$ & $9^2$ \\
\hline
$11$ & $4$ & $5^2$ & $6^2$ \\
\hline
$11$ & $5$ & $7^2$ & $9^2$ \\
\hline
$11$ & $7$ & $4^2$ & $2^2$ \\
\hline
$11$ & $8$ & $7^2$ & $12^2$ \\
\hline
$11$ & $9$ & $8^2$ & $12^2$ \\
\hline
$11$ & $14$ & $13^2$ & $19^2$ \\
\hline
$11$ & $16$ & $14^2$ & $20^2$ \\
\hline
$11$ & $18$ & $15^2$ & $13^2$ \\
\hline
$11$ & $25$ & $22^2$ & $20^2$ \\
\hline
$11$ & $36$ & $33^2$ & $31^2$ \\
\hline
$11$ & $38$ & $36^2$ & $34^2$ \\
\hline
$11$ & $47$ & $44^2$ & $42^2$ \\
\hline
$12$ & $13$ & $14^2$ & $18^2$ \\
\hline
$12$ & $19$ & $17^2$ & $16^2$ \\
\hline
$12$ & $25$ & $26^2$ & $30^2$ \\
\hline
$12$ & $31$ & $29^2$ & $28^2$ \\
\hline
$12$ & $67$ & $59^2$ & $58^2$ \\
\hline
$12$ & $79$ & $71^2$ & $70^2$ \\
\hline
$12$ & $139$ & $125^2$ & $124^2$ \\
\hline
$12$ & $151$ & $137^2$ & $136^2$ \\
\hline
$12$ & $235$ & $215^2$ & $214^2$ \\
\hline
\end{tabular}\\ \ \\
\end{tabular}
\end{center}
\caption{$\SG{\ger{a,a+d}}$ and $\ub{a,d,1}$ when $a\in E(d)$ for $d\in\{3,\ldots,12\}$}\label{tab2}
\end{table}

%%%%%%%%%%%%%%%%%%%%%%%
\end{document}